\def \Cov{\mathop{\sf Cov}\nolimits}
\def \FEC{\mathop{\sf FEC}\nolimits}
\def \FSet{\mathop{\sf FSet}\nolimits}
\def \FSets{\mathop{\sf FSets}\nolimits}
\def \C{\mathfrak{C}}
\def \E{\mathfrak{E}}
\def \F{\mathfrak{F}}
\def \G{\mathfrak{G}}
\def \cF{\mathcal{F}}
\def \cC{\mathcal{C}}
\def \U{\mathfrak{U}}
\def \Cat{\mathfrak{Cat}}
\def \GCat{\mathsf{GCat}}
\def \colim {\mathop{\sf colim}\nolimits}
\def \lim {\mathop{\sf lim}\nolimits}
\def \pt{\mathsf{pt}}
\def \Sets{\mathsf{Sets}}
\def \Des{\mathsf{Des}}
\def \Hom{\mathsf {Hom}}
\def \Aut{\mathsf {Aut}}
\newtheorem{De}{Definition}
\numberwithin{De}{section}
\newtheorem{Rem1}{Remark}
\newtheorem{Th}[De]{Theorem}
\newtheorem{Pro}[De]{Proposition}
\newtheorem{Le}[De]{Lemma}
\newtheorem{Co}[De]{Corollary}
\newtheoremstyle{}{}{}{}{}{\small \bf \sc}{:}{ }{}
\theoremstyle{}
\newtheorem{Rem}[Rem1]{Remark}
\begin{document}

\title{The \'etale Fundamental Groupoid as a Terminal Costack}

\author[I. Pirashvili]{Ilia  Pirashvili}
\email{ilia\_p@ymail.com}
\maketitle

\begin{abstract}
In \cite{top.groupoid} we showed that the fundamental groupoid of a topological space can be defined by the Seifert-van Kampen theorem. This allowed us in effect to give the first axiomatisation of the fundamental groupoid. In this paper, we will prove that the analogue for the \'etale fundamental groupoid of a noetherian scheme $X$ holds as well. The proof here, however, is completely different and involves the study of generalised Galois categories, which could be of intrinsic interest.
\end{abstract}

\section*{Introduction}

Let $X$ be a noetherian scheme and denote by $\FEC(X)$ the site of finite \'etale coverings of $X$. Then we can define the assignment $\Pi_1:Y\mapsto \Pi_1(Y)$, where $Y$ is an \'etale scheme over $X$ and $\Pi_1(Y)$ is the \'etale fundamental groupoid of $Y$. In this paper, we will show that this association, which is a 2-functor, is in actuality a costack over $\FEC(X)$. The costack, the dual notion of a stack, is a rather overlooked construction. However, as we will see, for a covariant 2-functor to be a costack is actually a very natural property, as it merely means that the 2-categorical version of the Seifert-van Kampen theorem holds for every covering. Hence, by proving that the 2-functor $\Pi_1$ is a costack, we have proven that a slightly reformulated version of the Seifert-van Kampen theorem holds for the \'etale fundamental groupoid.

But more is true. As we will show in this paper, $\Pi_1$ is not just a costack over $X$, but indeed the 2-terminal costack. This demonstrates that the \'etale fundamental groupoid is indeed defined by the Seifert-van Kampen theorem. This result agrees with the one obtained in \cite{top.groupoid}, where we showed that the topological fundamental groupoid was a 2-terminal costack. Hence we are able to talk about the fundamental groupoid of a site, which could be of independent interest. Indeed, although not discussed in this paper, another use of this axiomatisation is that we can now also talk about other fundamental objects. For example, we could define the fundamental groupoid scheme of a site $\bf{T}$ to be the 2-terminal objects in the 2-category of costacks over $\bf{T}$ with values in groupoid schemes.
\newline

This paper was written as part of my PhD thesis at the University of Leicester under the supervision of Dr. Frank Neumann. He introduced me to the fundamental groupoid and inspired much of this paper, for which I would like to thank him.

\section{Preliminaries}
In this section we will fix some basic terminology. Recall that a 2-category is a category enriched in categories. A category $\G$ is said to be a \emph{groupoid}\index{Groupoid!} if every morphism is an isomorphism. 

\begin{De} Let $\G$ be a groupoid such that for every $a,b\in \G$, $\Hom(a,b)$ is endowed with the profinite topology and further the composition and inverse maps are continuous. We say that $\G$ is a finitely connected profinite groupoid, if it is equivalent to the 2-coproduct of finitely many connected groupoids coming from profinite groups.
\end{De}

Note that this is not the best way to define a profinite groupoid in the general case, since it does not deal with the topology on its connected components. One should define it as simply the filtered 2-limit of finite groupoids endowed with the profinite topology. But for the purposes of this paper, this simpler definition is adequate.

\subsection{2-Limits and 2-Colimits}\label{sec.2-lim.2-colim}

Just like limits and colimits are of fundamental importance in category theory, so too are 2-limits and 2-colimits in 2-category theory. In this subsection, we will talk a little about the 2-limit as well as the filtered 2-colimit of 2-functors. 
\newline

Let $I$ be a category and $\F:I\rightarrow \Cat$ be  a covariant 2-functor from the category $I$ to the 2-category $\Cat$ of categories. For an element $i\in I$ we let $\F_i$ be the value of $\F$ at $i$. For a morphism $\psi:i\rightarrow j$ we let $\psi_*:\F_i\to \F_j$ be the induced functor. For any $i\xrightarrow{\psi} j\xrightarrow{\nu} k$, one has the natural transformation $\mu_{\psi,\nu}:\nu_*\psi_*\rightarrow (\nu\psi)_*$ satisfying the coherence condition. We can now give the definitions:

Objects of the category 2-$\lim\limits_i\F_i$ (or simply 2-$\lim \F$) are collections $(x_i,\xi_\psi)$, where $x_i$ is an object of $\F_i$, while $\xi_\psi:\psi_*(x_j)\to x_i$ for $i\leq j$, is an isomorphism of the category $\F_j$ satisfying the 1-cocycle condition. That is, for any $i\xrightarrow{\psi} j\xrightarrow{\nu} k$, the diagram
$$\xymatrix{\nu_*(\psi_*(x_i)) \  \  \ar[r]^{\nu_*(\xi_{\psi})}\ar[d]^{\mu_{\psi,\nu}} & \nu_*(x_j)\ar[d]^{\xi_{\nu}} \\ 
	          (\nu\psi)_*(x_i)\ar[r]_{\xi_{\psi\nu}} & x_k }$$ 
commutes. A morphism from $(x_i,\xi_{\psi})$ to $(y_i,\eta_{\psi})$ is  a collection $(f_i)$, where $f_i:x_i\rightarrow y_i$ is a morphism of $\F_i$ such that for any $\psi:i\rightarrow j$, the following 
$$\xymatrix{\psi_*(x_i)\ar[r]^{\xi_{\psi}}\ar[d]_{\psi_*(f_i)}& x_j\ar[d]^{f_j} \\ 
\psi_*(y_i)\ar[r]_{\eta_{\psi}}& y_j}$$
 is a commutative diagram. 
\newline

The dual notion $2$-$\colim\limits_i \F_i$ is a category together with a family of functors
 $$\alpha_i:\F_i\to 2\text{-}\colim\limits_i \F_i$$
 and natural transformations $\lambda_{ij}:\alpha_j\psi_{ij}\Longrightarrow \alpha_i$, $i\leq j$,  satisfying  the $1$-cocycle condition: For any $i\leq j\leq k$,
 $$\lambda_{ik}=\lambda_{ij}\circ (\lambda_{jk}\star\psi_{ij}).$$
Furthermore, one requires that for any category ${\sf C}$, the canonical functor
$$\kappa:\Hom_{\Cat}(2\text{-}\colim\limits_i\F_i,\mathsf{C})\to 2\text{-}\lim\limits_i\Hom_{\Cat}(\F_i,{\sf C})$$
is an equivalence of categories. Here the functor $\kappa$ is given by $\kappa(\chi)=(\chi\circ  \alpha_i, \chi_i\star \lambda_{ij})$. It is well-known that $2$-$\colim$ exists and is unique up to a unique equivalence of categories (see \cite[pp. 192-193] {br}).

\section{Stacks and Costacks}\label{2-mathematics}

\subsection{Stacks}\label{stacksection}

This subsection follows \cite{I.Mo} closely, even on notation. Let $X$ be a site and $\F:X^{op}\rightarrow\Cat$ a 2-functor where $\Cat$ is the 2-category of categories. This is called a \emph{fibered category}\index{Fibered! Category} over $X$. It should be noted to avoid any confusion that this is sometimes called a prestack. If we have two fibered categories over $X$, then a morphism between them is called a \emph{fibered functor}\index{Fibered! Functor}. 

Let $X$ be a site and $\F:X^{op}\rightarrow\Cat$ a 2-functor. Let $U$ be an object in $X$ and $\U=\{U_i\rightarrow U\}$ a covering of $U$. Then we can consider the following diagram: 
$$\xymatrix{\bigsqcap\limits_{i\in I}\F(U_i)\ar@<-.5ex>[r] \ar@<.5ex>[r]& \bigsqcap\limits_{i,j\in I}\F(U_{ij})\ar@<-.7ex>[r]\ar[r]\ar@<.7ex>[r] & \bigsqcap\limits_{i,j,k\in I}\F(U_{ijk}).}$$
Here $U_{ij}=U_i\times_{U}U_j$ and $U_{ijk}=U_i\times_{U}U_j\times_{U}U_k$.
We denote its 2-limit by $2$-$\lim(\U,\F)$. Note that the 2-limit in this case is usually called the \emph{descent data} and denoted by $\Des(\U,\F)$. However, working with 2-limits makes it much clearer what a costack should be, hence we will keep using the above notation. 

\begin{De}[Stack]\label{stack} A fibered category $\F$ over $X$ is called a \emph{stack}\index{Stack} if for all objects $U$ of $X$ and for all coverings $\mathfrak{U}$ of $U$, the functor $\F(U)\rightarrow 2$-$\lim(\mathfrak{U},\F)$ is an equivalence of categories.
\end{De}

Just like for sheaves, we can talk about the associated stack of a 2-functor.

\begin{De}\label{uniquestack} Let $\F$ be a fibered category over a site $X$. Then $\hat{\F}$ is the associated stack of $\F$ if for every stack $\G$ over $X$, we have an equivalence of categories 
$$\Hom_{\mathsf{Fb}}(\F,\G)\cong\Hom_{\mathsf{St}}(\hat{\F},\G)$$
where $\mathsf{Fb}$ denotes the 2-category of fibered categories and $\mathsf{St}$ the 2-category of stacks.
\end{De}

\begin{Pro}[Direct Stackification]\label{direct} Let $\F:X^{op}\rightarrow\Cat$ be a 2-functor. Define $\F'(U):=2$-$\colim_\U(2$-$\lim(\U,\F))$. In other words, we take the filtered 2-colimits of the categories $2$-$\lim(\U,\F)$ over all coverings $\U$ of $U$. After iterating it 3 times we get the stackification. That is to say, we have 
$$\hat{\F}=\F'''(U)=2\text{-}\colim_\U(2\text{-}\lim(\U,\F'')).$$ 
\end{Pro}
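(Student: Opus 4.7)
The approach I propose is the classical \emph{plus construction} for stacks, adapted to the 2-categorical setting, in analogy with the standard two-step sheafification of presheaves of sets. The first task is to verify that $\F'$ is well-defined as a fibered category on $X$. The coverings of any object $U$, ordered by refinement, form a 2-filtered 2-category, so $\F'(U)=2\text{-}\colim_\U 2\text{-}\lim(\U,\F)$ is a 2-filtered 2-colimit in $\Cat$. Functoriality in $U$ is obtained by pulling coverings back along any morphism $V\to U$: the covering $\U=\{U_i\to U\}$ yields $\U_V=\{U_i\times_U V\to V\}$, producing a functor $2\text{-}\lim(\U,\F)\to 2\text{-}\lim(\U_V,\F)$ compatible with refinement and hence descending to a functor $\F'(U)\to \F'(V)$. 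The trivial covering $\{U=U\}$ provides the canonical comparison natural transformation $\F\to \F'$.

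The heart of the proof is the iteration statement: each application of the plus construction strictly improves the descent data. The crucial technical ingredient is that 2-filtered 2-colimits commute in $\Cat$ with the 2-limits appearing in the descent data, since $2\text{-}\lim(\U,-)$ is a 2-limit over a diagram involving only the three levels $\F(U_i)$, $\F(U_{ij})$, $\F(U_{ijk})$. Using this commutation one argues in three stages: the comparison $\F'(U)\to 2\text{-}\lim(\U,\F')$ becomes faithful for every covering $\U$, so $\F'$ is a \emph{separated} prestack; the comparison $\F''(U)\to 2\text{-}\lim(\U,\F'')$ becomes fully faithful, so that morphisms descend in $\F''$; and finally $\F'''(U)\to 2\text{-}\lim(\U,\F''')$ becomes essentially surjective, so that objects descend as well. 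Each stage proceeds in the familiar manner: take a descent datum at the given level, use commutation with the colimit to realise it on some common refinement covering, and then invoke the construction on this covering to produce the required global datum up to canonical equivalence.

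For the universal property from Definition \ref{uniquestack}, I would argue as follows. Given a stack $\G$ and a fibered functor $\F\to \G$, one obtains for each covering $\U$ of $U$ the induced functor $2\text{-}\lim(\U,\F)\to 2\text{-}\lim(\U,\G)\simeq \G(U)$; these assemble into $\F'(U)\to \G(U)$ and, upon iterating, into $\hat{\F}=\F'''\to \G$ extending the original functor. Uniqueness up to equivalence of categories is then enforced by the universal property of the 2-colimits defining each $\F^{(k)}$ together with the stack condition on $\G$, which yields the equivalences $\G\simeq \G'\simeq \G''\simeq \G'''$. Combined with the previous paragraph this identifies $\F'''$ with the associated stack.

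I expect the main obstacle to be the rigorous interchange of 2-filtered 2-colimits with the 2-limits defining the descent data, together with the rather delicate bookkeeping of the coherence isomorphisms $\xi_\psi$ and their images under refinement of coverings; it is here that the 2-categorical version genuinely differs from the 1-categorical sheafification. Once this commutation is precisely stated and verified, the three-step structure is essentially forced: the extra step compared with the two-step 1-categorical sheafification is accounted for by the additional 2-dimensional coherence data that must be glued alongside objects and morphisms.
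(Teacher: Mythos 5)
The paper itself offers no proof of Proposition \ref{direct}: it simply defers to Street's Theorem 3.8, so your reconstruction is necessarily an independent route. Its skeleton is the right one and matches the standard argument underlying Street's result: the plus construction indexed by coverings under refinement, the three-stage improvement (comparison functors become faithful after one application, fully faithful after two, essentially surjective after three, while the earlier properties are preserved), and the universal property obtained by assembling the functors $2\text{-}\lim(\U,\F)\to 2\text{-}\lim(\U,\G)\simeq\G(U)$ for a stack $\G$ and passing to the colimit. So as a strategy this is sound and is the proof the paper leaves implicit.

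There is, however, a genuine gap in the step you yourself identify as crucial. You justify all three stages by the claim that 2-filtered 2-colimits commute with the 2-limits $2\text{-}\lim(\U,-)$ "since the diagram involves only three levels." But each level is a product $\bigsqcap_{i\in I}$, $\bigsqcap_{i,j\in I}$, $\bigsqcap_{i,j,k\in I}$ over the index set of the covering, which may be infinite; filtered (2-)colimits commute with \emph{finite} (2-)limits, not with infinite products, so having only three levels does not make this limit finite and the interchange fails for a general site. The standard repair does not use the interchange at all: given a descent datum for $\F'$ over $\U=\{U_i\to U\}$, choose for each $i$ a covering $\V_i$ of $U_i$ realising the $i$-th component, pass to the composite covering $\{V_{i\alpha}\to U\}$, which refines $\U$, and realise the gluing isomorphisms and the cocycle condition there; this works because the colimit defining $\F'(U)$ ranges over \emph{all} coverings of $U$, so one never needs a single common index dominating the infinitely many $U_{ij}$ simultaneously. (Your interchange is valid verbatim when every covering admits a finite refinement --- the finitely coverable hypothesis the paper later imposes, satisfied by $\FEC(X)$ for $X$ noetherian --- but Proposition \ref{direct} is stated for an arbitrary site, so the refinement bookkeeping, with its coherence data for the $\xi_\psi$, cannot be bypassed.)
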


For the proof of this theorem, see \cite[Theorem 3.8]{Street}.

\subsection{Costacks}

Let $X$ be a site and $\F:X\rightarrow\Cat$ a 2-functor where $\Cat$ is the 2-category of categories. Dually to the above subsection, we call $\F$ a \emph{cofibered category}\index{Cofibered category} over $X$. 

Take an object $U$ of $X$ and a covering $\U=\{U_i\rightarrow U\}$. Then we can consider the following diagram: 
$$\xymatrix{\bigsqcap\limits_{i\in I}\F(U_i)& \bigsqcap\limits_{i,j\in I}\F(U_{ij})\ar@<-.5ex>[l] \ar@<.5ex>[l]& \bigsqcap\limits_{i,j,k\in I}\F(U_{ijk})\ar@<-.7ex>[l]\ar[l]\ar@<.7ex>[l]}$$
where $U_{ij}=U_i\times_{U}U_j$ and $U_{ijk}=U_i\times_{U}U_j\times_{U}U_k$.
We denote its 2-colimit by $2$-$\colim(\U,\F)$. 

\begin{De}[Costack]\label{cost} A cofibered category $\F$ over $X$ is called a \emph{costack}\index{Costack} if for all objects $U$ of $X$ and for all coverings $\mathfrak{U}$ of $U$, the functor $\F(U)\leftarrow 2$-$\colim(\mathfrak{U},\F)$ is an equivalence of categories.
\end{De}

Alternatively, we can define a costack using stacks. Namely, $\F$ is a costack if for every category $C$, the assignment $U\mapsto \Hom_{\C}(\F(U),C)$ is a stack. It is clear that this is equivalent to the above since $\Hom(A,-)$ is left exact. However, it should be noted that since it is not (in general) right exact, the duality between stacks and costacks breaks down here. Namely, it would not be sufficient to check that $U\mapsto\Hom_\Cat(\F(U),-)$ is a costack for $\F$ to be a stack.

\begin{De}[Uniqueness Property]\label{uniquecostack} Let $\F$ be a cofibered category over a site $X$. Then we will say that $\F$ has an associated costack $\hat{\F}$ if for every costack $\G$ over $X$, we have an equivalence of categories 
$$\Hom_{\mathsf{Cofb}}(\G,\F)\cong\Hom_{\mathsf{Cost}}(\G,\hat{\F})$$
where $\mathsf{Cofb}$ denotes the 2-category of cofibered categories and $\mathsf{Cost}$ the 2-category of costacks.
\end{De}

Note that if our category took values in groupoids, then it would be enough to check it for every groupoid. Unfortunately, unlike for stacks, it is not known whether every cofibered category has an associated costack. However, if it exists, it is clearly unique in the 2-categorical sense.

\section{Galois Categories} \label{galoischapter}

In this section we will generalise the classical (connected) Galois categories to the finitely connected case. Our main interest in them is to study the \'etale fundamental groupoid of a finitely connected, noetherian scheme. 

In the first subsection, we will show that the analogue of Grothendieck's classical result holds, and that a finitely connected Galois category is equivalent to the category of $\G$-$\FSets$, where $\G$ is a finitely connected, profinite groupoid and $\FSets$ denotes the category of finite sets.

In the next subsection, we will talk about the whole 2-category of such Galois categories, which includes the morphisms and 2-morphisms. We sharpen the result above, by proving that there is a 2-equivalence between the 2-category of finitely connected categories and the 2-category of finitely connected profinite groupoids. While there are many generalisations of Galois categories, some of which are no doubt more general than ours, this reformulation of the classical theorem seems to be new. 

Indeed, it will follow that an even more general formulation is true, namely Corollary \ref{Galoisstackequivalence}.


\subsection{Finitely Connected Galois Categories}

In order to give the main definition, recall that a morphism $u:A\to B$ of a category $\mathcal{C}$ is an \emph{epimorphism}\index{Morphism! Epimorphism} (resp. \emph{monomorphism}\index{Morphism! Monomorphism}) if for any object $X$ the induced map $\Hom_{\mathcal{C}}(B,X)\to \Hom_{\mathcal{C}}(A,X)$ (resp. $\Hom_{\mathcal{C}}(X,A)\to \Hom_{\mathcal{C}}(X,B))$)  is injective. Moreover, an epimorphism $u$ is called a \emph{strict epimorphism}\index{Morphism! Strict epimorphism} if the pull-back 
$$\xymatrix{A\times _B A\ar@{-->}[r]^{p_1} \ar@{-->}[d]^{p_2} & A\ar[d]^u\\A\ar[r]^u & B}$$ 
exists and $B$ is the coequaliser of the diagram: 
$$\xymatrix{ A\times_B A \ar@<0.5ex>[r]^{\ \ \ p_1} \ar@<-0.5ex>[r]_{\ \ \ p_2} & A\ar[r]^{u} & B. }$$

We give the following definition of a finitely connected Galois category which differs from the standard definition of a Galois category (see for example \cite{AGGT}).

\begin{De}\label{f-c G c} A {\sf (finitely-connected) Galois category}\index{Galois Category!} is a category $\mathcal{C}$ together with a set of covariant functors $\{\mathcal{F}_j:\mathcal{C}\rightarrow \FSets\}_{j\in J}$, satisfying the following axioms: 
\begin{enumerate}
\item Finite limits exist in $\mathcal{C}$. 
\item Finite colimits exist in $\mathcal{C}$. 
\item Any morphism $u:Y\rightarrow X$ in $\mathcal{C}$ factors as $Y\xrightarrow{u'}X'\xrightarrow{u''}X$, where $u'$ is a strict epimorphism and $u''$ is a monomorphism and there is an isomorphism $v:X'\coprod X''\to X$ such that $u''=vi_1$, where $i_1:X'\to X'\coprod X''$ is the standard inclusion.
\item Every $\mathcal{F}_j$ is right exact, i.e. $\mathcal{F}_j$ respects finite colimits.
\item Every $\mathcal{F}_j$ is left exact,  i.e. $\mathcal{F}_j$ respects finite limits.
\item There exists a finite subset $I\subset J$ such that any $\{u:Y\rightarrow X\}$ in $\mathcal{C}$ is an isomorphism if and only if $\mathcal{F}_i(u)$ is an isomorphism for all $i\in I$.
\end{enumerate}
\end{De} 

If $I$ can be chosen to be a one element set, then $\mathcal{C}$ is called \emph{connected}\index{Galois Category! Connected}. This is clearly equivalent to the standard definition of a Galois category. Several facts (Lemma 2.6 i), ii); Prop 3.1; Prop. 3.2 (1), (3) i), iii)) proven in \cite{AGGT} have immediate generalisations in our situation. To state these statements, recall that an object $X$ is called \emph{connected}\index{Object! Connected} if $X\not =0$ and  for any decomposition $X=Y\coprod Z$ one has $X=0$ or $Y=0$. Here $0$ denotes the initial object. 

\begin{Pro} \label{2612} Let $\mathcal{C}$ be a finitely connected Galois category. Then the following properties hold:
\begin{itemize} 
\item[i)] A morphism $u$ is a monomorphism (resp.  strong epimorphism) if and only if for all $i\in I$, the map  $\mathcal{F}_i(u)$ is injective (resp. surjective). A morphism is an isomorphism if and only if it is a monomorphism and a strong epimorphism. 
\item[ii)] An object $X$ is initial (resp. terminal) provided  for all $i\in I$, the set $\mathcal{F}_i(X)=\emptyset$ (resp. $\mathcal{F}(X)=*$). Here $*$ denotes the singleton.
\item[iii)] Any object $X$ has a unique decomposition $X=U_1\coprod \cdots \coprod U_k$, where the $U_i$ are connected. 
\item[iv)] If $U$ and $V$ are connected, then any morphism $U\to V$ is a strong epimorphism. In particular, any endomorphism $U\to U$ is an automorphism.
\item[v)] If $U$ is connected, then for any objects $A_1\cdots A_m$ the natural map
$$\coprod_{i=1}^m \Hom(U, A_i)\to \Hom(U, \coprod_{i=1}^m A_i)$$
is a bijection.
\end{itemize}
\end{Pro}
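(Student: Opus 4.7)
The overall strategy is to exploit axiom (6) to transport statements from $\mathcal{C}$ into $\FSets$ via the family $\{\mathcal{F}_i\}_{i\in I}$, where each statement becomes elementary, and to use the factorisation axiom (3) together with the exactness axioms (4),(5) whenever we need to detect properties of a morphism one piece at a time. Note first that $\mathcal{F}_i(0)=\emptyset$ and $\mathcal{F}_i(1)=\ast$ follow from right/left exactness applied to the empty (co)product, so ii) is immediate: a map $0\to X$ (resp. $X\to 1$) that becomes a bijection under every $\mathcal{F}_i$ is itself an isomorphism by (6), hence $X$ is initial (resp. terminal).

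For i), the forward direction is exactness: monos are preserved by left-exact functors and strict epis by right-exact functors. For the converse, factor $u:Y\to X$ as $Y\xrightarrow{u'}X'\xrightarrow{u''}X$ using (3), and observe that $\mathcal{F}_i(u')$ is surjective and $\mathcal{F}_i(u'')$ is injective for all $i$. If $\mathcal{F}_i(u)$ is injective, then $\mathcal{F}_i(u')$ is both surjective and injective, hence bijective, so $u'$ is an isomorphism by (6); together with the mono $u''$ this shows $u$ is a mono. Conversely, if $\mathcal{F}_i(u)$ is surjective, the mono $u''=vi_1$ has surjective $\mathcal{F}_i$-image; since $\mathcal{F}_i$ respects the coproduct decomposition of axiom (3), we get $\mathcal{F}_i(X'')=\emptyset$ for all $i$, hence $X''=0$ by ii), so $u''$ is an isomorphism and $u$ is a strict epi. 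The final claim of i), that mono + strong epi = iso, is a direct invocation of (6).

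For iii), induct on $N(X):=\sum_{i\in I}|\mathcal{F}_i(X)|$, which is a finite non-negative integer. If $X$ is connected we are done; otherwise $X=Y\amalg Z$ with $Y,Z\ne 0$, and the exactness of each $\mathcal{F}_i$ yields $N(X)=N(Y)+N(Z)$ with both summands strictly smaller, so the induction proceeds. For uniqueness, given $X=\coprod U_p=\coprod V_q$, use iv) and v) below: each inclusion $U_p\hookrightarrow X=\coprod V_q$ factors through exactly one $V_q$ by v), and then by iv) (plus indecomposability) it must be an isomorphism onto that $V_q$, establishing a bijection between the two decompositions. For iv), factor $u:U\to V$ as $U\twoheadrightarrow X'\hookrightarrow V$ via (3), so $V\cong X'\amalg X''$; since $V$ is connected, either $X'=0$ (forcing $\mathcal{F}_i(U)=\emptyset$ and hence $U=0$ by ii), contradicting connectedness) or $X''=0$, in which case $u''$ is an iso and $u$ is a strict epi; the endomorphism claim then follows because $\mathcal{F}_i(u)$ is a surjection of finite sets, hence a bijection, hence $u$ is an iso by (6).

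Finally for v), given $f:U\to \coprod_{i=1}^m A_i$, form the pullbacks $U_i:=U\times_{\coprod_j A_j} A_i$. Since both $\mathcal{F}_k$'s preserve limits and colimits, the identity $U=\coprod_i U_i$ holds after applying each $\mathcal{F}_k$ (it is the standard disjointness of the coproduct in $\FSets$), and therefore holds in $\mathcal{C}$ by i) and ii). Connectedness of $U$ forces exactly one $U_i$ to equal $U$ and the others to be initial, so $f$ factors through a unique $A_i$, proving bijectivity. The main obstacle throughout is the reverse direction of i), since everything else bootstraps from it together with axiom (6); the rest is a matter of matching the well-known $\FSets$-behaviour to $\mathcal{C}$ through the family of fibre functors.
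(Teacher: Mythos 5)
Your proposal is essentially correct, but note that the paper gives no argument here at all: it simply states that the proofs of the connected case in \cite{AGGT} (Lemma 2.6, Prop.\ 3.1, Prop.\ 3.2) carry over verbatim, with the single fibre functor replaced by the finite family $\{\mathcal{F}_i\}_{i\in I}$ via axiom (6). What you have written is precisely that adaptation spelled out from the axioms, so in substance it is the standard route rather than a new one; its value is that it makes explicit where axiom (3) (the image factorisation with complemented monos) and axiom (6) enter. Two steps are stated too loosely, though both are repairable with tools you already have. First, in the uniqueness part of iii) you claim that the map $U_p\to V_{q(p)}$ is an isomorphism ``by iv) plus indecomposability''; a strict epimorphism onto a connected object need not be an isomorphism (think of a surjection of transitive $G$-sets), so indecomposability alone does not suffice. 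The missing ingredient is that the coproduct injection $U_p\to X$ is a monomorphism --- its image under every $\mathcal{F}_i$ is an injection into a disjoint union, so i) applies --- whence $U_p\to V_{q(p)}$ is a monomorphism as well as a strict epimorphism, and then the last clause of i) gives the isomorphism; one should also say a word about why $p\mapsto q(p)$ is a bijection (e.g.\ by the symmetric argument together with counting $|\mathcal{F}_i(X)|$). Second, in v) you prove only that every map $U\to\coprod_i A_i$ factors through exactly one summand, i.e.\ surjectivity of the canonical map; injectivity again uses that each $\iota_i:A_i\to\coprod_j A_j$ is a monomorphism (same detection argument via i)) and that two distinct summands cannot both receive the image of $U$ since $\mathcal{F}_k(U)\neq\emptyset$ for some $k$. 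With these two sentences added, your argument is a complete and faithful expansion of what the paper delegates to \cite{AGGT}.
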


The proof is the same as for the connected case. (see \cite{AGGT}).

\begin{Le}\label{2712}  Let $\mathcal{C}$ be a finitely-connected Galois category and $t=e_1\coprod \cdots \coprod  e_d$ be a decomposition of the terminal object as a coproduct of connected objects. Then for each $1\leq i\leq d$  one has (after re-indexing)  $\mathcal{F}_i(e_i)=*$ and $\mathcal{F}_i(e_j)=\emptyset$, $j\not= i, \  1\leq i,j\leq d$.
\end{Le}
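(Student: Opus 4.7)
The plan is to use the exactness of each fibre functor, combined with the detection axiom (6), to pair off each connected component of $t$ with some $\mathcal{F}_j$. First, fix any $j \in I$. Since $\mathcal{F}_j$ is left exact, it preserves the terminal object, so $\mathcal{F}_j(t) = *$. Right exactness, on the other hand, gives
$$\mathcal{F}_j(t) = \mathcal{F}_j(e_1) \coprod \cdots \coprod \mathcal{F}_j(e_d),$$
since finite coproducts are finite colimits. Comparing these two expressions, the singleton $*$ is a disjoint union of $d$ finite sets, so exactly one of the $\mathcal{F}_j(e_i)$ equals $*$ and all the others are empty. This defines a function $\varphi \colon I \to \{1, \ldots, d\}$ sending $j$ to the unique index with $\mathcal{F}_j(e_{\varphi(j)}) = *$.

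The crux is then to show that $\varphi$ is surjective. Suppose for contradiction that some $i_0$ is not in the image. Then $\mathcal{F}_j(e_{i_0}) = \emptyset$ for every $j \in I$. Because $\mathcal{F}_j$ preserves finite colimits, it also sends the initial object $0$ to $\emptyset$, so the unique morphism $u \colon 0 \to e_{i_0}$ becomes the identity $\emptyset \to \emptyset$ under every $\mathcal{F}_j$ with $j \in I$. Axiom (6) then forces $u$ to be an isomorphism, giving $e_{i_0} \cong 0$, which contradicts the connectedness of $e_{i_0}$ (connected objects are nonzero by definition).

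To conclude, for each $i \in \{1, \ldots, d\}$ pick any $j_i \in \varphi^{-1}(i)$ and re-label $\mathcal{F}_{j_i}$ as $\mathcal{F}_i$; the desired values on the $e_k$ are then immediate from the definition of $\varphi$. The only genuine content is the surjectivity step, which extracts a non-trivial consequence from axiom (6) via the initial object; the remaining assertions are bookkeeping assembled from the exactness properties of the $\mathcal{F}_j$.
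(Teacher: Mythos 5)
Your argument is correct and is essentially the paper's own proof in contrapositive form: the paper, for each $i$, applies axiom (6) to the non-isomorphism $0\to e_i$ to find some $\mathcal{F}_j$ with $\mathcal{F}_j(e_i)\neq\emptyset$ and then uses $\mathcal{F}_j(t)=*$ together with preservation of coproducts to conclude, exactly the ingredients you use to define $\varphi$ and prove its surjectivity. No gaps; the bookkeeping with $\varphi$ is just a reorganization of the same argument.
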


\begin{proof}  Let $1\leq i\leq d$. Since $0\to e_i$ is not an isomorphism, there exists at least one $\mathcal{F}$ such that $\mathcal{F}(e_i)\not =\emptyset$. After reindexing we can assume that $\mathcal{F}=\mathcal{F}_i$. Since $\mathcal{F}_i(t)=*$ and $\mathcal{F}_i$ respects coproducts we see that 
$$\mathcal{F}_i(e_1)\coprod \cdots \coprod \mathcal{F}_i(e_d)=*.$$
So all terms except $\mathcal{F}_i(e_i)$ are empty sets and the result follows.
\end{proof}

Now we are in the position to prove the following result.

\begin{Le}[Main Lemma]\label{G.MainLemma} Let $\mathcal{C}$ be a finitely-connected Galois category. If
$$t=e_1\coprod \cdots \coprod  e_d$$
is the decomposition of the terminal object $t$ as a coproduct of connected objects, then there is an equivalence of categories
$$\mathcal{C}\cong \mathcal{C}_1\times \cdots \times \mathcal{C}_d$$
where $C_i$ , $1\leq i\leq d$ is the following full subcategory of $C$:
$$\mathcal{C}_i=\{X\in \mathcal{C}| \ \mathcal{F}_j(X)=\emptyset, j\not =i, 1\leq j\leq d\}.$$
Furthermore, the pair $(\mathcal{C}_i,\mathcal{F}_i)$ is a connected Galois category and for any element $k\in I$ the functor $\mathcal{F}_k$ is isomorphic to exactly one of the functors $\mathcal{F}_1,\cdots , \mathcal{F}_d$.
\end{Le}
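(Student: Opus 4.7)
My plan has three stages. First, I would use Lemma~\ref{2712} to relabel so that $\mathcal{F}_i(e_j) = *$ if $i = j$ and $\emptyset$ otherwise for $1 \le i,j \le d$, and then observe that for each $k \in I$ the decomposition $* = \mathcal{F}_k(t) = \coprod_j \mathcal{F}_k(e_j)$ singles out a unique $\sigma(k) \in \{1, \ldots, d\}$ with $\mathcal{F}_k(e_{\sigma(k)}) = *$; this gives a map $\sigma:I\to\{1,\ldots,d\}$ that extends the identity on $\{1,\ldots,d\}$. For any $X$ in $\mathcal{C}$, setting $X_i := X \times_t e_i$ and using the left and right exactness of each $\mathcal{F}_k$ together with $\mathcal{F}_k(t)=*$, one computes $\mathcal{F}_k(X_i) \cong \mathcal{F}_k(X) \times \mathcal{F}_k(e_i)$, which equals $\mathcal{F}_k(X)$ when $\sigma(k) = i$ and is empty otherwise. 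Hence the canonical map $\coprod_i X_i \to X$ becomes a natural isomorphism under every $\mathcal{F}_k$, and so is an isomorphism by Proposition~\ref{2612}(i).

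Second, I would introduce the auxiliary subcategory $\mathcal{D}_i := \{X \in \mathcal{C} : \mathcal{F}_k(X) = \emptyset \text{ for all } k \in I \text{ with } \sigma(k) \neq i\}$, which is a priori contained in $\mathcal{C}_i$. The $X_i$ just constructed lie in $\mathcal{D}_i$, so $\Phi: X \mapsto (X_1, \ldots, X_d)$ and $\Psi: (Y_i) \mapsto \coprod_i Y_i$ give functors between $\mathcal{C}$ and $\prod_i \mathcal{D}_i$. The composite $\Psi \Phi \cong \mathrm{id}$ is the previous step, while $\Phi \Psi \cong \mathrm{id}$ reduces, via distributivity of $(-)\times_t e_i$ over coproducts (easily verified through each $\mathcal{F}_k$), to $Y_j \times_t e_i = 0$ for $j \neq i$ and $Y_i \times_t e_i \cong Y_i$ when $Y_j \in \mathcal{D}_j$; both identities are checked by evaluating each $\mathcal{F}_k$ and using the defining vanishing condition of $\mathcal{D}_j$.

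Third, I would check that $(\mathcal{D}_i, \mathcal{F}_i)$ is a connected Galois category in the classical sense: axioms (1)--(5) are inherited from $\mathcal{C}$, since $\mathcal{D}_i$ is closed under the relevant constructions; axiom~(6) holds with the finite subfamily $I \cap \sigma^{-1}(i)$ because the $\mathcal{F}_k$'s with $\sigma(k)\neq i$ vanish identically on $\mathcal{D}_i$; and the terminal object $e_i$ of $\mathcal{D}_i$ is connected. Applying the classical Grothendieck theorem for connected Galois categories (\cite{AGGT}) to $(\mathcal{D}_i, \mathcal{F}_i)$ then identifies $\mathcal{D}_i$ with $\pi_i\text{-}\FSets$ for $\pi_i = \Aut(\mathcal{F}_i)$, and gives that every exact fibre functor on $\mathcal{D}_i$ is naturally isomorphic to $\mathcal{F}_i$. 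In particular $\mathcal{F}_k|_{\mathcal{D}_i} \cong \mathcal{F}_i|_{\mathcal{D}_i}$ for all $k \in \sigma^{-1}(i)$, and extending along the decomposition $X \cong \coprod_i X_i$ yields the global isomorphism $\mathcal{F}_k \cong \mathcal{F}_{\sigma(k)}$. The reverse inclusion $\mathcal{C}_i \subseteq \mathcal{D}_i$ is now immediate, completing the identification and the proof.

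The principal obstacle is the mismatch between the full set $I$ used in the conservativity axiom of $\mathcal{C}$ and the smaller set $\{1, \ldots, d\}$ appearing in the definition of $\mathcal{C}_i$: $\mathcal{F}_j(X) = \emptyset$ on $\{1, \ldots, d\}$ alone does not obviously force $X = 0$. Working throughout with the auxiliary $\mathcal{D}_i$ and deferring the identification $\mathcal{D}_i = \mathcal{C}_i$ to the very end sidesteps this, but the delicate technical point at the step where classical Grothendieck is invoked is to show that $\mathcal{F}_i$ \emph{alone} (rather than only the jointly conservative subfamily $\{\mathcal{F}_k : \sigma(k) = i\}$) is conservative on $\mathcal{D}_i$, which is the genuine input from the classical connected theory.
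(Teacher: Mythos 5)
Your stages 1 and 2 are sound and essentially recast the paper's own argument (pulling back along $e_i\to t$, checking everything by applying the fibre functors and invoking joint conservativity via Proposition \ref{2612}(i)) with an explicit quasi-inverse instead of the paper's Hom-set computations; the auxiliary $\mathcal{D}_i$ is a reasonable device for the $I$ versus $\{1,\dots,d\}$ mismatch. The genuine gap is in stage 3, at exactly the point you flag and then leave unresolved: the claim that $\mathcal{F}_i$ \emph{alone} is conservative on $\mathcal{D}_i$. This is not something the classical connected theory can supply --- it is a \emph{hypothesis} of the classical definition of a (connected) Galois category, so invoking Grothendieck's theorem for $(\mathcal{D}_i,\mathcal{F}_i)$ before establishing it is circular. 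What you have proved in stage 3 is only that $\mathcal{D}_i$ with the jointly conservative family $\{\mathcal{F}_k : k\in I\cap\sigma^{-1}(i)\}$ is again a finitely connected Galois category with connected terminal object $e_i$; passing from joint conservativity of that family to conservativity of the single functor $\mathcal{F}_i$ requires a separate argument, which is missing.

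That missing argument is precisely the content of the paper's base case $d=1$: for a finitely connected Galois category whose terminal object is connected, any one fibre functor $\mathcal{F}$ reflects isomorphisms. One shows first that $\mathcal{F}(U)\neq\emptyset$ for every connected $U\neq 0$, because $U\to t$ is a strict epimorphism by Proposition \ref{2612}(iv) and $\mathcal{F}(t)=*$; hence $\mathcal{F}(A)=\emptyset$ forces $A=0$. Then a monomorphism $u:A\to B$ with $\mathcal{F}(u)$ bijective is an isomorphism, using axiom (3) to write $B\cong A\coprod C$ and concluding $C=0$; and a general $v$ with $\mathcal{F}(v)$ bijective is handled by applying $\mathcal{F}$ to the pullback $A\times_B A$ and the diagonal $\delta$, deducing that $\delta$, and hence $v$ as a monomorphism, is an isomorphism. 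Until this (or an equivalent) argument is supplied for $(\mathcal{D}_i,\mathcal{F}_i)$, your stage 3 does not go through: you obtain neither the assertion that $(\mathcal{C}_i,\mathcal{F}_i)$ is a connected Galois category, nor the isomorphisms $\mathcal{F}_k\cong\mathcal{F}_{\sigma(k)}$, nor the inclusion $\mathcal{C}_i\subseteq\mathcal{D}_i$, all of which hinge on it.
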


\begin{proof} We proceed by induction on $d$. Assume $d=1$. Thus $t$ is connected. In this case $\mathcal{C}_1=\mathcal{C}$. Take any of $\mathcal{F}_i$ and call it $\mathcal{F}$. First we show that $\mathcal{F}$ reflects isomorphisms, meaning that if $v$ is a morphism, such that $\mathcal{F}(v)$ is an isomorphism, then $v$ is an isomorphism. If $U$ is connected, then $U\to t$ is a strict epimorphism thanks to Proposition \ref{2612} iv). It follows that $\mathcal{F}(U)\to \mathcal{F}(t)=*$ is a strict epimorphism. Thus $\mathcal{F}(U)\not=\emptyset$. Since any object is a coproduct of connected ones and $\mathcal{F}$ respects coproducts, it follows that if $A$ is not an initial object, then $\mathcal{F}(A)\not =\emptyset.$
Assume $u:A\to B$ is a monomorphism, such that $\mathcal{F}(u)$ is an isomorphism. Then $B\cong A\coprod \mathcal{C}$. Hence $\mathcal{F}(\mathcal{C})=\emptyset$, so $\mathcal{C}=0$ and $u$ is an isomorphism. 

Now  let $v:A\to B$ be a general morphism, such that $\mathcal{F}(v)$ is an isomorphism. Consider the following commutative diagram
$$\xymatrix{A\ar[drr]^{id}\ar[dr]_{\delta}\ar[ddr]_{id}&&\\ & A\times_BA\ar[r]\ar[d]&A\ \ar[d]_v\\&A\ar[r]^v& B .}$$
Here $\delta$ is the diagonal map and hence a monomorphism. Apply $\mathcal{F}$ to this diagram and use the fact that $\mathcal{F}$ preserves pulbacks and $\mathcal{F}(v)$ is an isomorphism. We obtain that $\mathcal{F}(\delta)$ is an isomorphism. Thus $\delta$ is an isomorphism. It follows that $v$ is a monomorphism (thanks to \cite[Lemma 2.4]{AGGT}) and hence an isomorphism. 
Thus $\mathcal{F}$ reflects isomorphisms and $(\mathcal{C},\mathcal{F})$ is a connected Galois category. By \cite[Theorem 2.8]{AGGT} any other $\mathcal{F}_i$ is isomorphic to $\mathcal{F}$. Hence we have proven the lemma for $d=1$.

Assume now that $d>1$. Thanks to Lemma \ref{2712}, we have $\mathcal{F}_i(e_j)=\emptyset$ for all $j\not =i$ and $\mathcal{F}_i(e_i)=*$, $1\leq i,j\leq d$. One easily sees that for each $1\leq i\leq d$ the subcategory $\mathcal{C}_i$ is closed under finite limits and colimits. Assume $X=\coprod_{j=1}^k U_j$ is a decomposition as a coproduct of connected objects. 

\underline{Claim 1}: We have $X\in \mathcal{C}_i$ if and only if $U_1,\cdots, U_k\in \mathcal{C}_i$. In fact if  $U_1,\cdots, U_k\in \mathcal{C}_i$, then for any $j\not =i$, $1\leq j\leq d$ one has $\mathcal{F}_j(U_1)=\cdots =\mathcal{F}_j(U_k)=\emptyset$. Thus
$\mathcal{F}_j(X)=\mathcal{F}_j(U_1)\coprod\cdots\coprod \mathcal{F}_j(U_k)=\emptyset.$ Hence $X\in \mathcal{C}_i$. Conversely, if $X\in \mathcal{C}_i$, then $$\emptyset =\mathcal{F}_j(X)=\mathcal{F}_j(U_1)\coprod\cdots\coprod \mathcal{F}_j(U_k).$$
Thus $\mathcal{F}_j(U_1)=\cdots= \mathcal{F}_j(U_k)=\emptyset$ and $U_1,\cdots U_k\in \mathcal{C}_i$.

\underline{Claim 2}: The object $e_i$ is a terminal object in the category $\mathcal{C}_i$. So, we have to prove that the set $\Hom(X,e_i)$ is a singleton provided $X\in \mathcal{C}_i$. By the first claim it is enough to assume that $X$ is connected. According to Proposition \ref{2612} v) 
 $$*=\Hom(X,t)=\Hom(X,e_1)\coprod \cdots \coprod \Hom(X,e_d).$$
 So the set $\Hom(X,e_i)$ has at most one element. To show that it has exactly one element, we need to show that $\Hom(X,e_j)=\emptyset$ for $j\not =i$. In fact, assume there is a morphism $X\to e_j$. Since both objects are connected, this map must be a strict epimorphism. This implies that $\emptyset =\mathcal{F}_j (X)\to \mathcal{F}_j(e_j)=*$ is surjective and hence a contradiction. Thus our second claim is proven. It follows from the case $d=1$, that the pair $(\mathcal{C}_i,\mathcal{F}_i)$ is a connected Galois category.

\underline{Claim 3}: Our third claim is that if $i\not =j$, then for any objects $0\not =X\in \mathcal{C}_i$ and $Y\in \mathcal{C}_j$ one has $\Hom(X,Y)=\emptyset$. In fact, since $e_j$ is terminal in $\mathcal{C}_j$ there exists a unique morphism $Y\to e_j$. Thus it suffices to show that $\Hom(X,e_j)=\emptyset$, but this was shown in the proof of Claim 2. 
\newline 

Define the functor
$$\xi:\mathcal{C}_1\times \cdots \times \mathcal{C}_d\to \mathcal{C}$$
by $\xi(X_1,\cdots,X_d)=X_1\coprod \cdots \coprod X_d$.
We will show that the functor $\xi$ is an equivalence of categories. Take an object $X\in \mathcal{C}$ and consider the pull-back
$$\xymatrix{X_i\ar[d]\ar[r]&X\ar[d]\\e_i\ar[r]&t.}$$
\underline{Claim 4}: We want to show that $X_i\in \mathcal{C}_i$. In fact, take any $j\not =i$. Since the functor $\mathcal{F}_j$ respects pullbacks we obtain a diagram of sets
$$\xymatrix{\mathcal{F}_j(X_i)\ar[d]\ar[r]&\mathcal{F}_j(X)\ar[d] \ \\ 
\emptyset \ar[r]& \mathcal{F}_j(t).}$$
It follows that $\mathcal{F}_j(X_i)=\emptyset$ and thus $X_i\in \mathcal{C}_i$. Moreover, the natural morphism $X_1\coprod X_d\to X$ is an isomorphism, because every $\mathcal{F}_i$ takes it to an isomorphism. It follows that the functor $\xi$ is essentially surjective. It remains to show that the functor $\xi$ is full and faithful. Take the objects $X_i,Y_i\in \mathcal{C}_i$, $i=1,\cdots,d$. We have 
$$\Hom(\coprod_{i=1}^d,\coprod_{i=1}^dY_i)=\prod_{i=1}^d\Hom(X_i, Y_1\coprod \cdots \coprod Y_d)$$
Thus it remains to show that for any object $Z\in \mathcal{C}_i$ one has 
$$\Hom(Z, Y_1\coprod \cdots \coprod Y_d)=\Hom(Z,Y_i)$$
If $Z$ is connected this follows from Proposition \ref{2612} v) and Claim 3. For the general case we decompose $Z=Z_1\coprod \cdots \coprod Z_k$. 

Then  we  have
\begin{align*}
\Hom(Z, Y_1\coprod \cdots \coprod Y_d)&=\Hom(\coprod_{j=1}^kZ_j, Y_1\coprod \cdots \coprod Y_d)\\ 
&=\prod_{j=1}^k\Hom(Z_j, Y_1\coprod \cdots \coprod Y_d)\\ &=\prod_j\Hom(Z_j,Y_i)\\
&=\Hom(\coprod Z_j,Y_i)\\ 
&=\Hom(Z,Y_i)
\end{align*}
Hence $\xi$ is an equivalence of categories.
\end{proof}

Recall the following easy but important fact:

\begin{Pro}\label{homrep} Let $\G$ be a connected groupoid and let $x\in\G$. Then we have an equivalence of categories 
$$\Hom_{\Cat}(\G,\Sets)\cong \Aut(x)\text{-}\mathsf{Sets}$$
where $\Aut(x)\text{-}\mathsf{Sets}$ denotes the category of $\Aut(x)$-Sets.
\end{Pro}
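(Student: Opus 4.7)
The plan is to reduce the statement to the well-known fact that a one-object groupoid is the same thing as a group, and then pass to functor categories. More precisely, I would show that the inclusion of the full subgroupoid $B\Aut(x) \hookrightarrow \G$ on the single object $x$ is an equivalence of categories, and then apply $\Hom_\Cat(-,\Sets)$ to both sides.

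The key step in the first half is the equivalence $B\Aut(x) \simeq \G$. Essential surjectivity uses the defining property of a connected groupoid: for each object $y \in \G$ there exists at least one morphism $x \to y$, which is automatically an isomorphism since $\G$ is a groupoid; choose such a $\phi_y$ for every $y$, with $\phi_x = \mathrm{id}_x$. Full faithfulness is tautological since we have included $\Aut(x)$ as a full subcategory. A quasi-inverse $R : \G \to B\Aut(x)$ sends every object to $x$ and a morphism $f : y \to z$ to $\phi_z^{-1} \circ f \circ \phi_y \in \Aut(x)$.

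Once this equivalence is in place, the functor $\Hom_\Cat(-,\Sets)$ (being a 2-functor between 2-categories) sends equivalences to equivalences, and so
$$\Hom_\Cat(\G,\Sets) \simeq \Hom_\Cat(B\Aut(x),\Sets).$$
The right-hand side is, by unwinding definitions, exactly the category $\Aut(x)\text{-}\Sets$: a functor $B\Aut(x) \to \Sets$ is a choice of a set $S$ (the value at the unique object) together with a group homomorphism $\Aut(x) \to \Aut_{\Sets}(S)$, i.e.\ an $\Aut(x)$-set, and natural transformations correspond exactly to $\Aut(x)$-equivariant maps.

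There is no real obstacle here; the only mildly non-canonical step is the choice of morphisms $\phi_y$, which uses the axiom of choice but is harmless. If desired, one can write the equivalence directly and avoid invoking the general principle that equivalences of categories induce equivalences on $\Hom_\Cat(-,\Sets)$: the functor $F \mapsto F(x)$, with $\Aut(x)$ acting via the image of morphisms, gives the equivalence, with quasi-inverse sending an $\Aut(x)$-set $S$ to the functor $y \mapsto S$ on objects and $(f:y\to z) \mapsto (\phi_z^{-1} f \phi_y)\cdot(-)$ on morphisms.
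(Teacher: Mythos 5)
Your argument is correct: restricting along the inclusion of the full one-object subgroupoid on $x$, which is an equivalence precisely because $\G$ is connected (essential surjectivity comes from choosing isomorphisms $\phi_y:x\to y$), and then identifying functors from a one-object groupoid to $\Sets$ with $\Aut(x)$-sets, is the standard proof, and your explicit quasi-inverse $F\mapsto F(x)$ description is also fine. The paper itself offers no proof of this proposition, merely recalling it as a well-known fact, so there is nothing to contrast with; your write-up supplies exactly the routine verification the author leaves implicit.
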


If we changed $\Sets$ with $\FSets$, the category of finite sets, the above would still hold. Hence we can now generalise group actions to groupoid actions and for a groupoid $\G$, we write $\G$-$\FSets$ or $\Hom_\Cat(\G,\FSets)$. It should be emphasised that we use two different notations for the exact same category. The second notation will  mainly be used in calculations. If, however, $\G$ were a profinite groupoid, then we would only consider the continuous actions. For simplicity though, by abuse of notations, we will still refer to it as $\Hom_\Cat(\G,\FSet)$ or $\G$-$\FSets$. 

\begin{Co}\label{G.essentialsurj} Let $\{\mathcal{F}_i:\mathcal{C}\rightarrow \FSets\}_{i\in I}$ be a finitely connected Galois Category. Then it is equivalent to $\G$-$\FSets$, where $\G$ is a finitely connected profinite groupoid.
\end{Co}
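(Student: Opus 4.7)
The plan is to reduce the statement to the classical connected case by invoking the Main Lemma, and then repackage the resulting product of $G$-$\FSets$ categories as a single groupoid action.

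First I would apply Lemma \ref{G.MainLemma} to obtain the decomposition $\mathcal{C}\cong \mathcal{C}_1\times \cdots\times \mathcal{C}_d$, where each pair $(\mathcal{C}_i,\mathcal{F}_i)$ is a connected Galois category in the classical sense. Grothendieck's theorem (i.e.\ the classical statement, corresponding to the case $d=1$ handled inside the proof of the Main Lemma via \cite[Theorem 2.8]{AGGT}) then yields a profinite group $G_i$ together with an equivalence $\mathcal{C}_i\simeq G_i\text{-}\FSets$, and under this equivalence $\mathcal{F}_i$ corresponds to the forgetful functor.

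Next I would assemble the groupoid. Let $\G$ be the 2-coproduct $\G=BG_1\sqcup\cdots\sqcup BG_d$, where $BG_i$ denotes the one-object groupoid associated with the profinite group $G_i$, equipped with the profinite topology on its hom-sets. By construction $\G$ is a finitely connected profinite groupoid in the sense of the Definition in Section~1. It then remains to identify $\G\text{-}\FSets$ with $\mathcal{C}_1\times\cdots\times\mathcal{C}_d$. A continuous functor $\G\to \FSets$ is, by the universal property of the 2-coproduct, the same as a $d$-tuple of continuous functors $BG_i\to \FSets$; the profinite version of Proposition \ref{homrep} identifies the $i$-th factor with $G_i\text{-}\FSets$. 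Thus
$$\Hom_{\Cat}(\G,\FSets)\;\cong\;\prod_{i=1}^d \Hom_{\Cat}(BG_i,\FSets)\;\cong\;\prod_{i=1}^d G_i\text{-}\FSets\;\cong\;\mathcal{C}_1\times\cdots\times\mathcal{C}_d\;\cong\;\mathcal{C}.$$

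The essential content is all carried by the Main Lemma and the classical theorem, so there is no real obstacle beyond a careful bookkeeping of topologies. The only point that requires a moment of care is checking that a $d$-tuple of continuous $G_i$-actions on finite sets really does correspond to a \emph{continuous} action of $\G$, i.e.\ that the profinite topology on $\Hom_\G(x,y)$ for $x,y$ in the same component agrees with the one transported from $G_i$; this is automatic because the connected components of $\G$ are, by construction, precisely the $BG_i$, so no cross-component morphisms exist and the topology is determined componentwise.
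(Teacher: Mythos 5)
Your proposal is correct and follows essentially the same route as the paper: decompose via Lemma \ref{G.MainLemma} into a product of connected Galois categories, identify each factor with $G_i$-$\FSets$ by the classical theorem (Proposition \ref{homrep}), and reassemble using $\prod_{i}\Hom_{\Cat}(BG_i,\FSets)\cong\Hom_{\Cat}(\coprod_i BG_i,\FSets)$. The extra remark on the componentwise profinite topology is a harmless refinement of what the paper leaves implicit.
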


\begin{proof} As proven in the above lemma (\ref{G.MainLemma}), 
$$\{\mathcal{F}_i:\mathcal{C}\rightarrow \FSets\}_{i\in I}\cong\{\mathcal{F}_j:\prod_{j'\in J} \mathcal{C}_{j'}\rightarrow \FSets\}_{j\in J}$$ 
such that $J$ is a finite set and $\mathcal{F}_j(C_k)=\emptyset$ for $k\neq j$. Hence our Galois category is equivalent to $\mathcal{F}_j:\prod_{j\in J}\mathcal{C}_j\rightarrow \FSets$. Again by the above lemma, we know that for each $j\in J$, the functor $\mathcal{F}_j:\mathcal{C}_j\rightarrow \FSets$ is a connected Galois category and hence, using Proposition \ref{homrep}, we know that it is equivalent to $\G$-$\FSets$ where $\G$ is a connected, profinite groupoid. The result now follows from the fact that 
$$\prod_{j\in J}\Hom_{\Cat}(\G_j,\FSets)\cong \Hom_{\Cat}(\coprod_{j\in J}\G_j,\FSets).$$
\end{proof}

\subsection{The 2-category of Galois Categories}

\begin{De} Let $\{\mathcal{F}_i:\mathcal{C}\rightarrow \FSets\}_{i\in I}$ and $\{\mathcal{G}_j:\mathcal{D}\rightarrow \FSets\}_{j\in J}$ be two Galois categories. A morphism of Galois categories consists of a map $f:J\rightarrow I$, a functor $\varphi:\mathcal{C}\rightarrow \mathcal{D}$ preserving finite limits and finite colimits and a collection of isomorphisms $\lambda_{j,\varphi},j\in J$, as given in the following diagram 
$$\xymatrix{ \mathcal{D}\ar[rd]_{\mathcal{G}_j} & \dtwocell\omit{\lambda_{j,\varphi}}& \mathcal{C}\ \ar[ll]_{\varphi}\ar[dl]^{\mathcal{F}_f(j)}  \\
& {\FSets} & . }$$ 
We will refer to it as $\{(f,\varphi,\lambda_{j,\varphi}):\mathcal{F}_{f(j)}\rightarrow \mathcal{G}_j\}$. 
\end{De} 

To define composition, we need to define the composition of the $\lambda_{j,\varphi}$'s. So say we now have 
$$\xymatrix{ \mathcal{E}\ar[rrrrdd]_{\mathcal{H}_k}& & \drtwocell\omit{\ \ \ \ \lambda_{k,\phi}} & & \mathcal{D}\ar[dd]^{\mathcal{G}_{f(k)}}\ar[llll]^{\phi} & & \dltwocell\omit{\lambda_{k,\varphi}} & & \mathcal{C}\ \ar[llll]^{\varphi}\ar[ddllll]^{\ \ \ \ \mathcal{F}_g(i)} & &  \\ 
& & & & & & & & \\ 
& & & & {\FSets} & & & & .}$$ 
Define $\lambda_{k,\phi}\circ\lambda_{k,\varphi}(x)=\lambda_{k,\phi}(\varphi(x))\circ\lambda_{k,\varphi}(x).$ In more detail we have 
$$\lambda_{k,\phi}\circ\lambda_{k,\varphi}(x):\mathcal{F}(x)\xrightarrow{\lambda_{k,\varphi}(x)}\mathcal{G}(\varphi(x))\xrightarrow{\lambda_{k,\phi}(\varphi(x))}\mathcal{E}(\phi\circ\varphi(x)).$$

It is easily verified that the above construction is strictly associative.

\begin{De} A 2-morphism between $\{(f,\varphi,\lambda_{j,\varphi}):\mathcal{F}_{f(j)}\rightarrow \mathcal{G}_j\}$ and \newline $\{(f,\phi,\lambda_{j,\phi}):\mathcal{F}_{f(j)}\rightarrow \mathcal{G}_j\}$ is a natural transformation 
$$\xymatrix{ \mathcal{C}\ar@<-1.5ex>[r]_{\phi} \ar@<1.5ex>[r]^{\varphi}\rtwocell\omit_{\ \ \ \ \ \zeta} & \mathcal{D} }$$
such that additionally the following diagram 
$$\xymatrix{\mathcal{F}(x)\ar[r]^{\lambda_{j,\varphi}(x)}\ar[dr]_{\lambda_{j,\phi}(x)} & \mathcal{G}(\varphi(x))\ar[d]^{\zeta(x)} \\
	           & \mathcal{G}(\phi(x)) }$$
commutes for all $j$ and all $x$.
\end{De}

This shows that we can talk about the (strict) 2-category of Galois categories. We will denote it by $\GCat$. 

\begin{Le}\label{hshjsjk} Let $\{\mathcal{F}_i:\mathcal{C}_1\times\cdots\times \mathcal{C}_n\rightarrow \FSets\}_{i\in I}$ be a finitely connected Galois category and $\mathcal{G}:\mathcal{D}\rightarrow \FSets$ a connected Galois category. 

Let $\mathcal{A}:\mathcal{C_1}\times\cdots\times \mathcal{C}_n\rightarrow \mathcal{D}$ be a functor between the Galois categories preserving finite limits and finite colimits. Then there exists an $i\in I$ and $\mathcal{A}_i:\hat{\mathcal{C}_i}\rightarrow \hat{\mathcal{D}}$, such that $\mathcal{A}=\mathcal{A_i}\circ p_i$, where $p_i:\mathcal{C}_1\times\cdots\times \mathcal{C}_n\rightarrow \mathcal{C}_i$ is the $i$-th projection.
\end{Le}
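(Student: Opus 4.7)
The plan is to exploit the decomposition of the terminal object in the product $\mathcal{C}:=\mathcal{C}_1\times\cdots\times\mathcal{C}_n$ in order to isolate a single ``active'' factor. The terminal object $(t_1,\ldots,t_n)$ has the canonical componentwise-coproduct decomposition $e_1\coprod\cdots\coprod e_n$, where $e_k:=(0,\ldots,t_k,\ldots,0)$ carries $t_k$ in the $k$-th slot and the initial object elsewhere. Because each $\mathcal{C}_k$ is a connected Galois category, $t_k$ is connected in $\mathcal{C}_k$, and since both coproducts and the initial object in $\mathcal{C}$ are computed componentwise, a direct check shows that each $e_k$ is connected in $\mathcal{C}$.

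Now apply $\mathcal{A}$. Preservation of finite limits forces $\mathcal{A}(t_1,\ldots,t_n)=t_{\mathcal{D}}$, while preservation of finite colimits yields
$$t_{\mathcal{D}}\cong \mathcal{A}(e_1)\coprod\cdots\coprod\mathcal{A}(e_n).$$
Because $\mathcal{D}$ is connected, $t_{\mathcal{D}}$ is connected, and hence exactly one $\mathcal{A}(e_i)$ equals $t_{\mathcal{D}}$ while the remaining $\mathcal{A}(e_k)$ are initial. Fix this distinguished index $i$; its uniqueness is built into the argument.

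The key step is to prove that $\mathcal{A}(0,\ldots,X_k,\ldots,0)=0$ in $\mathcal{D}$ for every $k\neq i$ and every $X_k\in\mathcal{C}_k$. Decomposing $X_k$ into its connected summands in $\mathcal{C}_k$ and invoking preservation of coproducts reduces the claim to the case $X_k=U$ connected. Then the structural morphism $(0,\ldots,U,\ldots,0)\to e_k$ is, componentwise, the strict epimorphism $U\to t_k$ supplied by Proposition \ref{2612}(iv) together with identities on the remaining slots. Applying $\mathcal{A}$ yields a morphism $\mathcal{A}(0,\ldots,U,\ldots,0)\to \mathcal{A}(e_k)=0$ in $\mathcal{D}$; composing with $\mathcal{G}$ gives a map into $\emptyset$, so by Proposition \ref{2612}(ii) the source is itself initial.

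Finally, any object of $\mathcal{C}$ decomposes as $(X_1,\ldots,X_n)=\coprod_k (0,\ldots,X_k,\ldots,0)$, and applying $\mathcal{A}$ together with the vanishing just proved gives $\mathcal{A}(X_1,\ldots,X_n)\cong \mathcal{A}(0,\ldots,X_i,\ldots,0)$; the analogous decomposition on morphisms shows this identification is natural. Setting $\mathcal{A}_i:\mathcal{C}_i\to\mathcal{D}$ by $\mathcal{A}_i(X):=\mathcal{A}(0,\ldots,X,\ldots,0)$ then produces a limit- and colimit-preserving functor with $\mathcal{A}\cong \mathcal{A}_i\circ p_i$. The central difficulty is the vanishing step: preservation of finite colimits alone only delivers a strict epimorphism onto $0$, and it is the strict initiality of $0$ in $\mathcal{D}$, accessed via the fiber functor $\mathcal{G}$ and Proposition \ref{2612}(ii), that finally forces the source to be initial.
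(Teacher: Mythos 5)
Your proposal is correct and follows essentially the same line as the paper: decompose the terminal object into connected summands $e_k$, use connectedness of the terminal object of $\mathcal{D}$ to single out the index $i$ with $\mathcal{A}(e_i)$ terminal and $\mathcal{A}(e_k)$ initial for $k\neq i$, show $\mathcal{A}$ kills everything supported away from slot $i$, and reassemble to factor through $p_i$. The only (harmless) variation is in the vanishing step -- the paper gets the morphism into the initial object by preserving the pullback of $X\to t\leftarrow e_k$, whereas you use the canonical map $(0,\ldots,X_k,\ldots,0)\to e_k$ directly and detect initiality via the fibre functor $\mathcal{G}$ -- and you additionally spell out the naturality of the identification and the exactness of $\mathcal{A}_i$, which the paper leaves implicit.
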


\begin{proof} Take the terminal object $t\in \mathcal{C}$. As in Lemma \ref{G.MainLemma}, $t=\coprod_{i\in I}e_i$, where every $e_i$ is connected. Since $\mathcal{D}$ is connected, its terminal object is connected. Since $\mathcal{A}$ respects finite limits and colimits there exists $i\in I$ such that $\mathcal{A}(e_i)=\star$ and $\mathcal{A}(e_j)=\emptyset$ for all $j\neq i$. Recall now the equivalence $\mathcal{C}\cong \mathcal{C}_1\times\cdots\times \mathcal{C}_n$ as constructed in Lemma \ref{G.MainLemma}. For every $X\in \mathcal{C}$ we have $X\cong \coprod_{i}^n X_i$ where $X_i$ is the pullback of the diagram:
$$\xymatrix{ & X\ar[d] \\ 
		e_i\ar[r] & t=\coprod_{i\in I}e_i.}$$ 
Since $\mathcal{A}$ respects pullbacks, for all $j\neq i$ we have that $\mathcal{A}(X_j)$ is the pullback of 
$$\xymatrix{ & \mathcal{A}(X)\ar[d] \\ 
		\mathcal{A}(e_j)\ar[r] & \mathcal{A}(t)}$$ 
and since $Y\rightarrow \emptyset$ implies that $Y$ is the empty set, we get that $\mathcal{A}(X_j)=\emptyset$. So $\mathcal{A}(X)=\coprod\limits_j \mathcal{A}(X_j)=\mathcal{A}(X_i)$. Hence $\mathcal{A}$ factors through the projection $\mathcal{C}_1\times\cdots\times \mathcal{C}_n\rightarrow \mathcal{C}_i$.
\end{proof}

\begin{Co}\label{G.faithfullyflat} Let $\G_i$-s be finitely connected profinite groupoids indexed by a finite set $I$ and $\mathfrak{H}$ be a connected profinite groupoid. Then we have an equivalence of categories: 
$$\Hom_\GCat(\Hom_\Cat(\coprod_{i\in I}\G_i,\FSets),\Hom_\Cat(\mathfrak{H},\FSets))\cong $$
$$\cong\coprod_{i\in I}\Hom_{\GCat}(\Hom_\Cat(\G_i,\FSets),\Hom_\Cat(\mathfrak{H},\FSets)).$$
\end{Co}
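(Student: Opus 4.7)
The plan is to reduce the claim directly to Lemma \ref{hshjsjk}. Write $\mathcal{C}_i := \Hom_\Cat(\G_i, \FSets)$ and $\mathcal{D} := \Hom_\Cat(\mathfrak{H}, \FSets)$. The last displayed equivalence in the proof of Corollary \ref{G.essentialsurj} gives $\Hom_\Cat(\coprod_{i \in I}\G_i, \FSets) \simeq \prod_{i \in I} \mathcal{C}_i$ as Galois categories, and $\mathcal{D}$ is a connected Galois category (with a single fiber functor $\mathcal{G}$) since $\mathfrak{H}$ is connected.

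A morphism in $\Hom_\GCat(\prod_{i \in I}\mathcal{C}_i, \mathcal{D})$ is a triple $(f, \varphi, \lambda)$, where $f$ picks an index $i_0 \in I$, $\varphi$ preserves finite limits and finite colimits, and $\lambda$ is a natural isomorphism $\mathcal{F}_{i_0} \Rightarrow \mathcal{G} \circ \varphi$ of fiber functors. Applying Lemma \ref{hshjsjk} to $\varphi$ produces an index $i_1 \in I$ and a factorisation $\varphi \cong \varphi_{i_1} \circ p_{i_1}$ with $p_{i_1}$ the projection to $\mathcal{C}_{i_1}$, where $i_1$ is characterised as the unique index such that $\varphi(e_{i_1})$ is the terminal object of $\mathcal{D}$. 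The coherence data $\lambda$, combined with Lemma \ref{2712}, forces $i_0 = i_1$. Since the definition of $\GCat$ requires 2-morphisms to preserve the index map $f$, the left-hand Hom-category automatically decomposes as a disjoint union over $i \in I$, and it suffices to identify its $i$-th component with $\Hom_\GCat(\mathcal{C}_i, \mathcal{D})$.

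This identification is given by the assignment $(\varphi_i, \lambda_i) \mapsto (\varphi_i \circ p_i, \lambda_i \star p_i)$, whiskering $\lambda_i$ with the projection $p_i$. Essential surjectivity is exactly the content of Lemma \ref{hshjsjk}, and full faithfulness on 2-morphisms reduces to the observation that natural transformations $\varphi_i \circ p_i \Rightarrow \varphi'_i \circ p_i$ correspond bijectively to natural transformations $\varphi_i \Rightarrow \varphi'_i$, with the coherence against $\lambda$ and $\lambda'$ transporting automatically. The main obstacle I anticipate is the careful bookkeeping of the three pieces of morphism data in $\GCat$ and verifying that the index $i_0$ selected by $f$ really matches the index $i_1$ produced by Lemma \ref{hshjsjk}; once that identification is in hand, the remainder of the proof is a coherence-chase.
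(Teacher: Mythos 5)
Your proposal is correct and follows essentially the same route as the paper, whose proof is just a one-line appeal to Lemma \ref{hshjsjk}, Corollary \ref{G.essentialsurj} and the exactness of morphisms in $\GCat$; you simply spell out the whiskering-along-$p_i$ bookkeeping that the paper leaves implicit. The only (harmless) imprecision is that when some $\G_i$ is finitely connected rather than connected, the disjoint-union decomposition is indexed by the chosen fiber functors rather than by $I$ itself, but this refinement matches on both sides and does not affect the argument.
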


\begin{proof} This follows from the Lemmas \ref{hshjsjk} and \ref{G.essentialsurj} and the fact that by definition functors in $\GCat$ respect finite limits and finite colimits.
\end{proof}

\begin{Th}\label{galois equivalence} The 2-category of finitely connected Galois categories is contravariantly 2-equivalent to the 2-category of profinite finitely connected groupoids.
\end{Th}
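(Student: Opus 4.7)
The plan is to exhibit the contravariant 2-functor
$$\Phi \colon (\text{finitely connected profinite groupoids})^{op} \longrightarrow \GCat$$
that sends $\G = \coprod_{i\in I} \G_i$ (with each $\G_i$ connected) to the Galois category $\Hom_\Cat(\G,\FSets)$ equipped with the fiber functors $\mathcal{F}_i$ obtained by projecting to $\G_i\text{-}\FSets$ and evaluating at a chosen object of $\G_i$, a continuous functor $\alpha\colon \mathfrak{H}\to\G$ to the precomposition functor $\alpha^* = (-)\circ\alpha \colon \G\text{-}\FSets\to\mathfrak{H}\text{-}\FSets$ (together with the obvious index map on labels and the tautological isomorphisms $\lambda_{j,\alpha^*}$), and a natural transformation $\eta\colon \alpha\Rightarrow\beta$ to the induced 2-morphism $\eta^*\colon \alpha^*\Rightarrow\beta^*$. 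Functoriality and contravariance are then immediate, and I would verify the 2-equivalence by showing essential surjectivity on objects and that the induced functor on hom-categories is an equivalence for every pair of groupoids.

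Essential surjectivity on objects is precisely Corollary~\ref{G.essentialsurj}. For the hom-categories, fix $\mathfrak{H} = \coprod_j \mathfrak{H}_j$ and $\G = \coprod_i \G_i$ with each $\mathfrak{H}_j, \G_i$ connected. On the groupoid side,
$$\Hom(\mathfrak{H},\G)\cong \prod_j \coprod_i \Hom(\mathfrak{H}_j,\G_i),$$
because a continuous functor out of a coproduct is a tuple and a continuous functor out of a connected groupoid lands in a single component. On the Galois-category side, using $\mathfrak{H}\text{-}\FSets\cong \prod_j \mathfrak{H}_j\text{-}\FSets$ together with the evident product decomposition of morphisms into a product Galois category, and then applying Corollary~\ref{G.faithfullyflat} to each factor, yields the same product of coproducts. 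This reduces the problem to the case where both groupoids are connected.

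In the connected case pick basepoints and set $H = \Aut_{\mathfrak{H}}(*)$, $G = \Aut_{\G}(*)$; these are profinite groups and $\mathfrak{H},\G$ are equivalent to the corresponding one-object groupoids. Continuous functors $\mathfrak{H}\to\G$ correspond, up to natural isomorphism, to continuous group homomorphisms $H\to G$ up to inner automorphism of $G$, and a morphism of Galois categories $\G\text{-}\FSets\to\mathfrak{H}\text{-}\FSets$ intertwining the fiber functors is, by the classical Grothendieck-Galois theorem (Theorem~2.8 of \cite{AGGT}, already invoked in the proof of Lemma~\ref{G.MainLemma}), exactly the data of such a continuous group homomorphism. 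Unpacking the definition of a 2-morphism of Galois categories in this case, the compatibility with the fiber-functor data forces the natural transformation to come from an element $g\in G$ conjugating one homomorphism to the other, which is precisely a natural transformation between the original functors of profinite groupoids.

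The main technical obstacle is the bookkeeping of the fiber-functor data in a 1-morphism of $\GCat$, namely the auxiliary index map $f\colon J\to I$ and the isomorphisms $\lambda_{j,\varphi}$: one must verify that on Galois categories equivalent to $\G\text{-}\FSets$ and $\mathfrak{H}\text{-}\FSets$ this data is, up to canonical isomorphism, forced by the connected-component structure of the groupoids. Once this point is clean the theorem reduces to the classical Galois theorem applied componentwise via Corollary~\ref{G.faithfullyflat} and Lemma~\ref{hshjsjk}.
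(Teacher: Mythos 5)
Your proposal is correct and follows essentially the same route as the paper: essential surjectivity via Corollary \ref{G.essentialsurj}, reduction of hom-categories to the connected case through the product/coproduct decompositions handled by Lemma \ref{hshjsjk} and Corollary \ref{G.faithfullyflat}, and the classical connected Galois correspondence (the paper cites SGA~I, Cor.~6.2, for what you attribute to the classical Grothendieck--Galois theorem) to finish.
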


\begin{proof} This equivalence is given by associating to a profinite groupoid $\G$, the Galois category $\Hom_{\Cat}(\G,\Sets)$. On functors and natural transformations, the 2-functor is defined in the obvious way by composition. To show that this is a 2-equivalence, we only need to show that it's essentially surjective and full and faithful. Both of course in the 2-mathematical sense. Essential surjectivity is proven in Corollary \ref{G.essentialsurj}. 
\newline
\underline{Full and Faithful}: Let $\G$ and $\mathfrak{H}$ be profinite and finitely connected groupoids. We have $\G\cong\coprod_{i\in I}\G_i$ and $\mathfrak{H}\cong \coprod_{j\in J}\mathfrak{H}_j$, where the $\G_i$-s and $\mathfrak{H}_j$-s are profinite groupoids with one object. Hence 
\begin{eqnarray*} \Hom_\Cat(\G,\mathfrak{H})&\cong&\Hom_{\Cat}(\coprod_{j\in J}\G_i,\coprod_{j\in J}\mathfrak{H}_j) \\
					     &\cong&\prod_{i\in I}\Hom_\Cat(\G_i,\coprod_{j\in J}\mathfrak{H}_j) \\
					    &\cong& \prod_{i\in I}\coprod_{j\in J}\Hom_\Cat(\G,\mathfrak{H}_j).
\end{eqnarray*}
The last equivalence comes from the fact that the $\G_i$-s and $\mathfrak{H}_j$-s have a single object. Hence any functor $\G_i$ can only go to a single $\mathfrak{H}_j$. From \cite[Corolarry 6.2. p.111]{SGAI} we get  that $\Hom_\Cat(\G_i,\mathfrak{H}_j)\cong\Hom_\GCat(\Hom_\Cat(\mathfrak{H}_j,\FSets),\Hom_\Cat(\G_i,\FSets)).$ On the other hand z
\begin{eqnarray*} & & \Hom_\GCat(\Hom_\Cat(\mathfrak{H},\FSets),\Hom_\Cat(\G,\FSets))\\ 
		          &\cong&\Hom_\GCat(\Hom_\Cat(\coprod_{j\in J}\mathfrak{H}_j,\FSets),\Hom_\Cat(\coprod_{i\in I}\G_i,\FSets)) \\ 
	                    &\cong&\Hom_\GCat(\Hom_\Cat(\coprod_{j\in J}\mathfrak{H}_j,\FSets),\prod_{i\in I}\Hom_\Cat(\G,\FSets)) \\
		         &\cong&\prod_{i\in I}\Hom_\GCat(\Hom_\Cat(\coprod_{j\in J}\mathfrak{H}_j,\FSets),\Hom_\Cat(\G,\FSets)).
\end{eqnarray*}
Using  Corollary \ref{G.faithfullyflat} we get the desired result.
\end{proof}

\begin{Co}\label{eq} Let $\G\rightarrow \mathfrak{H}$ be a functor between finitely connected, profinite groupoids. Assume that the induced functor $\Hom_{\Cat}(\mathfrak{H},\FSets)\rightarrow \Hom_{\Cat}(\G,\FSets)$ is an equivalence of categories. Then $\G\rightarrow \mathfrak{H}$ was already an equivalence of categories.
\end{Co}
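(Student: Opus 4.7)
The plan is to derive this corollary directly from Theorem \ref{galois equivalence}, using the general fact that any 2-equivalence of 2-categories is conservative on 1-morphisms: a 1-morphism between objects is an equivalence if and only if its image under the 2-equivalence is one. Since the theorem provides a contravariant 2-equivalence $\G\mapsto \Hom_{\Cat}(\G,\FSets)$ between the relevant 2-categories, the statement is essentially formal.

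The first step is to check that the induced functor
$F^{*}\colon \Hom_{\Cat}(\mathfrak{H},\FSets)\to \Hom_{\Cat}(\G,\FSets)$
is genuinely a 1-morphism in $\GCat$, not merely a functor of underlying categories. The map on indexing sets of fiber functors comes from the assignment $f\colon I\to J$ induced by $F$ on the finite sets of connected components of $\G$ and $\mathfrak{H}$ (any functor of groupoids sends a connected component into a connected component). The coherence isomorphisms $\lambda_{j,F^{*}}$ are obtained by choosing basepoints $x_{i}\in \G_{i}$ and $y_{j}\in \mathfrak{H}_{j}$ and picking an arrow from $F(x_{i})$ to $y_{f(i)}$ inside the connected component $\mathfrak{H}_{f(i)}$. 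Preservation of finite limits and colimits by $F^{*}$ is automatic since $F^{*}$ is assumed to be an equivalence.

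The second step is to promote the hypothesis that $F^{*}$ is an equivalence of categories to an equivalence in $\GCat$. Choose a quasi-inverse $G$ of $F^{*}$ as a plain functor; it automatically preserves finite limits and colimits, and the compatibility isomorphisms with fiber functors are produced by transporting the $\lambda_{j,F^{*}}$ through the unit and counit of the equivalence. Together with the first step, $F^{*}$ is therefore an equivalence in $\GCat$.

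Finally, by Theorem \ref{galois equivalence} the 2-functor $\G\mapsto \Hom_{\Cat}(\G,\FSets)$ is 2-fully-faithful and essentially surjective, so its fibers over objects and hom-categories are equivalences; in particular it reflects equivalences between objects. Applying this to $F^{*}$ yields that $F$ itself is an equivalence of profinite finitely connected groupoids. The only genuine bookkeeping sits in the first step, where one must check that the coherence $\lambda$-data on $F^{*}$ is well-defined up to isomorphism independently of the chosen basepoints; this is routine because any two basepoints in a connected groupoid are linked by a canonically specified isomorphism class of arrows, and any finite set-valued functor sends such an arrow to a bijection.
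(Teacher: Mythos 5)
Your proposal is correct and takes essentially the same route as the paper: the paper also deduces the corollary directly from Theorem \ref{galois equivalence}, observing that the 2-equivalence yields $\mathsf{Eq}(\G,\mathfrak{H})\cong\mathsf{Eq}(\G\text{-}\FSets,\mathfrak{H}\text{-}\FSets)$, i.e.\ that a 2-equivalence reflects equivalences of 1-morphisms. The bookkeeping you spell out (exhibiting $F^{*}$, and a quasi-inverse of it, as morphisms in $\GCat$ with the $\lambda$-data) is precisely what the paper's one-line argument leaves implicit.
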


\begin{proof} From Theorem \ref{galois equivalence} we get that 
$$\mathsf{Eq}(\G,\mathfrak{H})\cong \mathsf{Eq}(\G\text{-}\FSets,\mathfrak{H}\text{-}\FSets)$$
where $\mathsf{Eq}$ denotes the category of equivalences. This immediately implies the result.
\end{proof}

Unfortunately, we can not take general 2-limits with values in $\GCat$. So we can not talk about stacks with values in the 2-category of Galois categories for a general site. However, it follows from Proposition \ref{galois2limit} below, that if we only consider sites where every covering can be replaced by a finite one, we can avoid that problem. 

\begin{De} We call a site \emph{finitely coverable}\index{Site! Finitely coverable} if for every covering $\{U_i\rightarrow U\}_{i\in I}$ there exists a refinement $\{V_j\rightarrow U\}_{j\in J}$ such that $J$ is a finite set.
\end{De}

A stack with values in the 2-category of Galois categories will be referred to as a \emph{Galois stack}\index{Galois Category! Stack}. Similarly for a prestack. If we have two 2-functors $\F,\G:X\rightarrow \GCat$ from a finitely coverable site $X$ with values in Galois categories, then we will call a morphism between them that respects the structures a \emph{Galois transformation}\index{Galois Category! Transformation}, even though it would technically be a Galois functor. 

\begin{Rem} It should be noted that for Galois 2-functors $\F$ and $\G$, when we write $\Hom(\F,\G)$, we automatically assume that these are Galois transformations (i.e. respect finite limits and finite colimits), not just morphisms between 2-functors. 
\end{Rem}

Let $X$ be a site and $\F:X\rightarrow \sf{Groupoids}$ a covariant 2-functor. Then we denote by $\F_S$ the contravariant 2-functor given by $U\mapsto Hom_{\Cat}(\F(U),\Sets)$. Now take two covariant 2-functors $\E,\F:X\rightarrow \sf{Groupoids}$ and $F:\E\rightarrow \F$ a natural transformation. Then it is clear that $F_S:\F_S\rightarrow \E_S$ is a Galois transformation. But indeed Theorem \nolinebreak \ref{galois equivalence} shows that the reverse is also true. Hence we have the following as well:

\begin{Co}\label{Galoisstackequivalence} Let $X$ be a site. Then the 2-category of fibred functors over $X$ with values in Galois categories, and morphisms and 2-morphisms preserving the Galois structure, is contravariantly equivalent to the 2-category of cofibered functors over $X$ with values in profinite groupoids.
\end{Co}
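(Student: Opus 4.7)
The plan is to apply Theorem \ref{galois equivalence} pointwise and then verify that the resulting contravariant 2-equivalence on fibres assembles coherently over the site $X$. The construction already sketched just before the corollary gives one half: to a covariant 2-functor $\E:X\to\sf{Groupoids}$ (with values in finitely connected profinite groupoids) we associate the contravariant 2-functor $\E_S$ defined by $U\mapsto \Hom_\Cat(\E(U),\FSets)$, which by Corollary \ref{G.essentialsurj} lands in Galois categories; functoriality of $\Hom_\Cat(-,\FSets)$ turns a natural transformation $\E\to\F$ into a Galois transformation $\F_S\to\E_S$. The first step is therefore to check that this assignment is a (strict or pseudo) 2-functor from cofibered profinite-groupoid 2-functors to fibered Galois 2-functors; the verification is formal, as it only uses that $\Hom_\Cat(-,\FSets)$ is a 2-functor and that each $\E_S(U)$ inherits a canonical Galois structure from $\E(U)$.

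Next I would construct the inverse 2-functor. Given a fibered Galois 2-functor $\cF:X^{op}\to \GCat$, I use Theorem \ref{galois equivalence} to choose, for each $U\in X$, a finitely connected profinite groupoid $\Pi(U)$ together with a Galois equivalence $\cF(U)\simeq \Hom_\Cat(\Pi(U),\FSets)$. For a morphism $U\to V$ in $X$, the Galois functor $\cF(V)\to\cF(U)$ corresponds, under the 2-equivalence of Theorem \ref{galois equivalence}, to a functor $\Pi(U)\to\Pi(V)$, and likewise 2-morphisms on the Galois side transport to 2-morphisms between these functors of profinite groupoids. The coherence data (associator 2-cells for composition) are obtained by transporting the coherence data of $\cF$ through the 2-equivalence. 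Doing this produces a covariant 2-functor $\Pi:X\to\sf{Groupoids}$, essentially uniquely.

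The third step is to check that these two constructions are 2-inverse to one another. Unwinding: $(\E_S)^\vee\simeq \E$ and $(\cF^\vee)_S\simeq \cF$, both equivalences being induced pointwise by Theorem \ref{galois equivalence}; the pseudonatural coherences follow because Theorem \ref{galois equivalence} is not merely an equivalence of hom-categories on objects but a 2-equivalence of 2-categories, so it automatically matches compositions and natural transformations up to canonical isomorphism. Finally, on morphisms I must verify that the two Hom-categories
\[
\Hom(\E,\F)\quad\text{and}\quad \Hom_{\mathsf{Fb\text{-}Galois}}(\F_S,\E_S)
\]
are equivalent; this is again immediate from Theorem \ref{galois equivalence} applied objectwise, together with the fact that naturality squares in one direction correspond bijectively to naturality squares in the other, since the 2-equivalence preserves them.

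The main obstacle is not any pointwise statement but rather the coherence bookkeeping: one must make the pointwise choices $\cF(U)\simeq \Hom_\Cat(\Pi(U),\FSets)$ in a way that is pseudonatural in $U$, so that the induced functors $\Pi(U)\to\Pi(V)$ compose up to specified isomorphisms satisfying the pentagon axiom. Since Theorem \ref{galois equivalence} is a genuine (bi)equivalence of 2-categories, the usual transport-of-structure argument (choose equivalences and adjoint quasi-inverses, then define the coherence 2-cells by conjugation) produces these data canonically, and the pentagon reduces to the pentagon already satisfied by $\cF$. This is the only non-trivial piece, and it is a standard consequence of having a 2-equivalence rather than a mere equivalence of underlying categories.
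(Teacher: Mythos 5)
Your proposal follows essentially the same route as the paper: the paper simply observes that the assignment $\E\mapsto\E_S$, $U\mapsto \Hom_{\Cat}(\E(U),\FSets)$, sends natural transformations to Galois transformations and then invokes Theorem \ref{galois equivalence} pointwise, which is exactly your first step; your remaining steps (constructing the inverse by transport of structure and checking the coherence data) just make explicit what the paper leaves implicit. So the proposal is correct and matches the paper's approach, only with more of the standard 2-categorical bookkeeping spelled out.
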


\subsection{Galois categories under stackification}

From Proposition \ref{direct} it follows that if a property is preserved by both 2-limits and filtered 2-colimits, it is preserved by stackification. Hence we have these following facts, which can be checked by routine methods for 2-limits and 2-colimits separately:

\begin{Pro}\label{galois2limit} Let $I$ be a finite category and $\F:I\rightarrow \GCat$ be a 2-functor from $I$ to the 2-category of Galois categories given by $i\mapsto \{\cF_{ij}:\cC_i\rightarrow\FSets\}_{j\in J_i}$. Then the 2-limit of $\F$ is again a Galois category and all the natural projections $2$-$\lim\limits_i\F_i\rightarrow \F_i$ preserve the Galois structure, i.e. are exact.
\end{Pro}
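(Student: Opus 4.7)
The plan is to equip the 2-limit $\cC := 2\text{-}\lim_i \F_i$ with candidate fiber functors indexed by $K := \bigsqcup_{i \in I} J_i$, namely the composites $\cF_{ij} \circ \pi_i$, where $\pi_i : \cC \to \cC_i$ denotes the canonical projection out of the 2-limit. Throughout, I would lean on the fact that morphisms in $\GCat$ are by definition exact, so every transition functor $\psi_* : \cC_i \to \cC_j$ preserves finite limits, finite colimits, monomorphisms, strict epimorphisms, and coproduct decompositions; this makes componentwise assembly of data into objects and morphisms of $\cC$ essentially automatic.

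For axioms (1) and (2) of Definition~\ref{f-c G c}, I would verify directly that finite limits and colimits in $\cC$ exist and are computed componentwise. Given a finite diagram $(x^\alpha_i, \xi^\alpha_\psi)_\alpha$, one forms $\lim_\alpha x^\alpha_i$ in each $\cC_i$; since $\psi_*$ commutes with finite limits, the family $\xi^\alpha_\psi$ induces canonical structure isomorphisms on the limit object, and the 1-cocycle condition descends. The dual argument handles colimits. Consequently each $\pi_i$ is exact, which is exactly the second assertion of the proposition, and the composites $\cF_{ij} \circ \pi_i$ inherit exactness from the original $\cF_{ij}$, settling axioms (4) and (5).

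For axiom (3), given a morphism $u = (u_i) : Y \to X$ in $\cC$, I would apply the factorization axiom in each $\cC_i$ to obtain $Y_i \xrightarrow{u'_i} X'_i \xrightarrow{u''_i} X_i$ together with the splitting $X_i \cong X'_i \sqcup X''_i$. Because the transition functors preserve this entire data (coproducts, monomorphisms, strict epimorphisms) and the factorization is unique up to unique isomorphism, the families $(X'_i)$ and $(X''_i)$ acquire canonical structure isomorphisms that assemble them into objects of $\cC$, supplying the required factorization globally. Finally, for axiom (6), I would pick a finite conservative subset $I_i \subset J_i$ for each $i$ and set $K_0 := \bigsqcup_{i \in I} I_i$, which is finite because $I$ is a finite category. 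A morphism $(f_i)$ in $\cC$ is invertible if and only if each $f_i$ is invertible in $\cC_i$, since the proposed inverse family $(f_i^{-1})$ automatically satisfies the naturality square by inverting the original one; and invertibility of each $f_i$ is detected by $\{\cF_{ij}\}_{j \in I_i}$ by the Galois structure on $\cC_i$. The only step demanding genuine care, and the main potential obstacle, is the coherent assembly in axiom (3): one must verify that the structure isomorphisms on $(X'_i)$ and $(X''_i)$ obtained from uniqueness of the factorization really do satisfy the 1-cocycle condition, which in turn reduces to the fact that the entire factorization--decomposition data is transported by the exact functors $\psi_*$ in a functorial way.
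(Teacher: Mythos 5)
Your proposal is correct, and it is essentially the argument the paper has in mind: the paper gives no written proof, stating only that the claim "can be checked by routine methods" for 2-limits, and your componentwise verification (limits/colimits computed componentwise using exactness of the transition functors, fiber functors $\cF_{ij}\circ\pi_i$, the finite conservative family $\bigsqcup_i I_i$, and transport of the strict-epi/mono factorization and its complement via uniqueness up to unique isomorphism) is exactly that routine check spelled out. The one point you flag in axiom (3) does go through as you indicate, since strict epimorphisms are orthogonal to monomorphisms (giving unique comparison isomorphisms, hence the cocycle condition) and the complementary summand is determined up to a unique isomorphism over $X$.
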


\begin{Le}\label{Galois} Let $X$ be a site and $\F:X\rightarrow \Cat$ be a 2-functor, $\G:X\rightarrow \Cat$ a stack in $\Cat$ and $\psi:\F\rightarrow \G$ a natural transformation. Assume that $\F$ has an associated stack $\hat{F}$ and let $\hat{\psi}:\hat{\F}\rightarrow \G$ be the associated natural transformation. Then the following hold:
\begin{itemize} 
\item[i)] If $\psi$ respects (finite) limits, so does $\hat{\psi}$;
\item[ii)] If $\psi$ respects (finite) colimits, so does $\hat{\psi}$.
\end{itemize}
\end{Le}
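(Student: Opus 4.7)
The plan is to use the explicit direct stackification formula from Proposition \ref{direct}, which identifies $\hat{\F}$ with $\F'''$, where the primed operation is defined by $\F'(U):=2\text{-}\colim_\U 2\text{-}\lim(\U,\F)$. Because $\G$ is already a stack, the canonical arrow $\G(U)\to 2\text{-}\lim(\U,\G)$ is an equivalence for every covering $\U$, and passing to the filtered 2-colimit over coverings then shows that the comparison $\G\to \G'$ is a pointwise equivalence; iterating, $\G\simeq \G'''$. Under this identification $\hat{\psi}$ corresponds to the thrice-iterated $\psi'''$, so it is enough to prove that a single application of the prime operation preserves the property that a natural transformation respects finite limits (respectively finite colimits).

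First I would analyse the 2-limit step. An object of $2\text{-}\lim(\U,\F)$ is a descent datum $(x_i,\xi_\psi)$, and the key observation is that both finite limits and finite colimits in this category, when they exist, are computed componentwise on each $\F(U_i)$, with the gluing data for the (co)limit being assembled from the gluing data on the components together with the fact that the restriction functors respect the given universal cone up to the coherence supplied by the descent data. If each $\psi(U_i):\F(U_i)\to\G(U_i)$ preserves finite limits (resp.\ colimits), it follows that the induced functor $2\text{-}\lim(\U,\psi):2\text{-}\lim(\U,\F)\to 2\text{-}\lim(\U,\G)$ does too.

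Next I would pass to the filtered 2-colimit over coverings. The category of coverings of $U$ ordered by refinement is filtered, since any two coverings admit a common refinement via fibre products. Hence $\F'(U)$ and $\G'(U)$ are filtered 2-colimits in $\Cat$, and the induced $\psi'(U)$ is the filtered 2-colimit of the functors $2\text{-}\lim(\U,\psi)$. Filtered 2-colimits in $\Cat$ commute with finite 2-limits and, being colimits, automatically commute with 2-colimits; therefore a filtered 2-colimit of functors preserving finite limits (resp.\ finite colimits) again preserves finite limits (resp.\ finite colimits). Thus $\psi'$ inherits the exactness of $\psi$, and three iterations give both parts of the lemma.

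The main technical obstacle I anticipate is the careful 2-categorical bookkeeping in the first step, where (co)limits in descent categories must be described componentwise with coherent gluing data; this is conceptually routine but notationally delicate. The second step, the commutation of filtered 2-colimits with finite 2-limits in $\Cat$, is a standard 2-categorical upgrade of the usual 1-categorical fact and should cause no real difficulty once the description of (co)limits in descent data is in place.
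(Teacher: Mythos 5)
Your proposal is correct and follows essentially the same route as the paper, which proves the lemma precisely by invoking the direct stackification formula of Proposition \ref{direct} and checking the 2-limit (descent) step and the filtered 2-colimit step separately. The only minor caveat is your aside that filtered 2-colimits ``being colimits, automatically commute with 2-colimits''; the correct justification is that in a filtered 2-colimit of categories finite (co)limit diagrams are realised at some stage, which is the standard argument the paper has in mind when it calls these checks routine.
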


Using  Proposition \ref{galois2limit} we can talk about stacks with values in Galois categories (called Galois stacks). As such, using the universal definition, we still have the notion of an associated Galois stack, even if we don't prove that it always exists. We see then that the above can be reformulated in the following way: 

\begin{Co}\label{galois-fun-stackification} Let $\F:X\rightarrow \GCat$ be a 2-functor, $\G:X\rightarrow \GCat$ a Galois stack and $\psi:\F\rightarrow \G$ a Galois transformation. If $\F$ has an associated Galois stack $\hat{\F}$, then $\hat{\psi}:\hat{\F}\rightarrow \G$ is a Galois transformation as well.
\end{Co}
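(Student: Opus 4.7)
The plan is to reduce this corollary directly to Lemma \ref{Galois} applied twice. By the definition of a Galois transformation given just before the remark preceding the statement, a natural transformation $\psi$ between 2-functors into $\GCat$ is Galois precisely when, viewed as a natural transformation in the underlying 2-category $\Cat$, it respects finite limits and finite colimits. So the content of the corollary unpacks into two separate claims about $\hat\psi$ as a natural transformation in $\Cat$: that it preserves finite limits, and that it preserves finite colimits.

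The natural first step is to check that $\hat{\F}$, the associated Galois stack, coincides with the associated stack of $\F$ considered as a $\Cat$-valued 2-functor. This is needed in order to legitimately feed $\hat\psi$ into Lemma \ref{Galois}. By Proposition \ref{direct}, the stackification in $\Cat$ is produced by iterating three times the operation $\F \mapsto \F'$, where $\F'(U) = 2\text{-}\colim_{\U} \, 2\text{-}\lim(\U,\F)$. Proposition \ref{galois2limit} ensures that each $2\text{-}\lim(\U,\F)$ remains a Galois category with all the structure projections preserving finite limits and colimits. A routine parallel check for filtered 2-colimits (which commute with finite limits and finite colimits of finite sets in the usual way) shows that $\F'$ again takes values in $\GCat$ and that the canonical transformation $\F \to \F'$ is Galois. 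Iterating, the $\Cat$-stackification $\hat{\F}$ automatically lands in $\GCat$, so it agrees with the associated Galois stack by uniqueness of the latter.

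With this identification in hand, I would simply invoke Lemma \ref{Galois}. Since $\psi$ is a Galois transformation, it respects finite limits, so by part (i) so does $\hat\psi$. Likewise, $\psi$ respects finite colimits, so by part (ii) so does $\hat\psi$. Combining these two conclusions gives that $\hat\psi:\hat{\F} \to \G$ is a Galois transformation, which is exactly the claim.

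The main potential obstacle is the identification of the associated Galois stack with the ordinary $\Cat$-associated stack. Proposition \ref{galois2limit} already handles the 2-limit half of the direct stackification, but one must additionally verify (or cite) that filtered 2-colimits of diagrams in $\GCat$ remain Galois, in the sense that the colimit inherits a canonical family of fibre functors to $\FSets$ satisfying the six axioms of Definition \ref{f-c G c}. Once this is settled, the rest of the argument is just bookkeeping on top of Lemma \ref{Galois}; no new categorical machinery is required.
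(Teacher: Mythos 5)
Your proposal is correct and follows essentially the same route as the paper: the paper presents this corollary as nothing more than a reformulation of Lemma \ref{Galois}, whose parts i) and ii) give exactly the preservation of finite limits and finite colimits by $\hat{\psi}$ that you invoke. Your extra verification that the ordinary $\Cat$-stackification of a $\GCat$-valued 2-functor inherits the Galois structure (via Proposition \ref{direct}, Proposition \ref{galois2limit}, and the routine check for filtered 2-colimits over a finitely coverable site) is a point the paper leaves implicit rather than a genuinely different argument.
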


\section{The \'Etale Fundamental Groupoid}\label{etale} 

In this section we will state and prove the main result of this paper. We will prove that for any \'etale  map $Y\mapsto X$ of a noetherian scheme $X$, the association $\Pi_1:Y\mapsto\Pi_1(Y)$ is the 2-terminal costack.

That is to say, the \'etale fundamental groupoid is defined by the Seifert-van Kampen theorem, in complete analogue to the topological case, as proven in \cite{top.groupoid}. The proof however is different.

To prove that the 2-functor $\Pi_1$ is the 2-terminal costack over any noetherian scheme $X$ is equivalent to saying that it is the associated costack of the constant, covariant 2-functor taking the trivial groupoid as its value. Unfortunately we don't know much about costackification, so instead we will compose it with the functor $\Hom(-,\FSets)$, and hence get a Galois category. 

Since by the definition of a costack, after composing with $\Hom(-,\FSets)$ we will get a stack, we can use stackification to prove this theorem. This is also our main reason for studying properties preserved under 2-limits and filtered 2-colimits. 
\newline

Let $X$ be a noetherian scheme. We denote by $\FEC(X)$ the site of finite \'etale coverings of $X$. It is a well known result that the category of finite \'etale coverings is a finitely connected Galois category and that the following holds:

\begin{Th}\label{Thm.8.0.2} The 2-functor $\mathcal{_FE_C}:\FEC(X)\rightarrow \GCat$, where $\FEC$ denotes the category of finite \'etale coverings, given by $Y\mapsto\FEC(Y)$, forms a stack.
\end{Th}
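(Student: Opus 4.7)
The plan is to reduce the assertion to classical faithfully flat descent for finite \'etale morphisms. What must be checked is that for every $U \in \FEC(X)$ and every covering $\mathfrak{U} = \{U_i \to U\}_{i \in I}$ of $U$, the comparison functor
$$\FEC(U) \longrightarrow 2\text{-}\lim(\mathfrak{U}, \mathcal{_FE_C})$$
is an equivalence of Galois categories, where the target is the descent category built from $\FEC(U_i)$, $\FEC(U_{ij})$ and $\FEC(U_{ijk})$.

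First I would reduce to the case of a single covering morphism. Since $X$ is noetherian, every covering in $\FEC(X)$ admits a finite subcover, so we may assume $I$ is finite and replace $\mathfrak{U}$ by the single morphism $V := \coprod_{i \in I} U_i \to U$. This $V$ is still finite \'etale over $X$, being a finite coproduct of such, and $V \to U$ is surjective by hypothesis, hence faithfully flat and quasi-compact. Unfolding the definitions, the descent $2$-limit becomes equivalent to the classical category of pairs $(W, \varphi)$ with $W \in \FEC(V)$ and $\varphi : p_1^* W \xrightarrow{\sim} p_2^* W$ an isomorphism over $V \times_U V$ satisfying the cocycle condition on $V \times_U V \times_U V$.

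The main step is then Grothendieck's effective descent theorem. For a faithfully flat, quasi-compact morphism $V \to U$, descent is effective for affine morphisms, and the properties ``finite'' and ``\'etale'' are both fpqc-local on the base. Consequently the pullback functor $\FEC(U) \to \FEC(V)^{\mathrm{desc}}$ is fully faithful (descent of morphisms between finite \'etale covers) and essentially surjective (effectivity of descent data for finite \'etale $V$-schemes), so it is an equivalence of categories.

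Finally I would check that this equivalence respects the Galois structure. The pullback functors between the $\FEC(-)$'s are exact and commute with the natural fibre functors, so by Proposition \ref{galois2limit} the $2$-limit inherits a canonical Galois category structure, and the comparison functor is a Galois transformation by construction. The main obstacle is the bookkeeping needed to identify the abstract $2$-categorical descent limit used in Section \ref{stacksection} with the concrete descent category appearing in Grothendieck's theorem; once this identification is made, the theorem follows directly from the classical SGA results on finite \'etale descent.
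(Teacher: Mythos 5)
Your proposal is correct, and it is essentially the argument the paper leaves implicit: the paper offers no proof of this theorem at all, stating it as a well-known fact about finite \'etale coverings, so what you have written is exactly the standard SGA1-style justification (effectivity of fpqc descent for affine morphisms, plus the fact that ``finite'' and ``\'etale'' are fpqc-local on the base) that the citation-by-assertion rests on. The only step you gloss is the reduction to a finite subcover: passing from the covering $\{U_i\to U\}_{i\in I}$ to a finite subfamily uses that $U$ is quasi-compact and \'etale maps are open, and then one needs the routine remark that descent of morphisms holds for the arbitrary family (a sheaf condition, valid for any fpqc family) while effectivity for the finite refinement then forces effectivity for the original covering; alternatively one can run your single-morphism argument directly with $V=\coprod_{i\in I}U_i$ over the finite subfamily and transport the descent datum along the refinement. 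With that bookkeeping, and the identification of the $2$-limit of Section \ref{stacksection} with the classical category of descent data, your proof is complete and compatible with the Galois structure via Proposition \ref{galois2limit}, as you say.
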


\begin{De} Let $\{\mathcal{F}_j:\cC\rightarrow\FSets\}_{j\in J}$ be a finitely connected Galois category. Then we can define the \emph{fundamental groupoid}\index{Fundamental groupoid! Galois category} $\Pi_1(\cC)$ of our Galois category as follows: 
\begin{itemize} 
\item Objects of $\Pi_1(\cC)$ are the functors $\{\cF_j:\cC\rightarrow \FSets\}_{j\in J}$;
\item For $\cF_i$ and $\cF_j$ we define $\Hom_{\Pi_1(\cC)}(\cF_i,\cF_j):=\mathsf{Iso}_{\mathsf{Fnct}}(\cF_i,\cF_j)$ where $\mathsf{Iso}_{\mathsf{Fnct}}$ denotes the set of natural isomorphisms.
\end{itemize}
\end{De}

\begin{De} Let $X$ be a site. We define the \emph{\'etale fundamental groupoid}\index{Fundamental groupoid! \'Etale} $\Pi_1(X)$ of $X$ to be the fundamental groupoid of the Galois category $\mathcal{_FE_C}(X)$ of finite \'etale coverings of $X$.
\end{De}

Equivalently, Theorem \ref{Thm.8.0.2} can be stated as follows:

\begin{Th}\label{hom-stack} The 2-functor $\mathcal{_FE_C}:\FEC(X)\rightarrow \GCat$ given by $Y\mapsto\Pi_1(Y)$-$\FSets$, where $\Pi_1(Y)$ denotes the \'etale fundamental groupoid of $Y$, forms a stack.
\end{Th}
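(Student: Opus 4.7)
The plan is to deduce Theorem \ref{hom-stack} from Theorem \ref{Thm.8.0.2} by exhibiting the two 2-functors as equivalent in the 2-category of prestacks on $\FEC(X)$. The bridge is the Galois correspondence established in the previous section, which says that any finitely connected Galois category is canonically equivalent to $\G$-$\FSets$ for $\G$ its groupoid of fiber functors. Because this correspondence is 2-natural (Theorem \ref{galois equivalence}), the objectwise equivalences assemble into an equivalence of 2-functors, and the stack condition transports across.

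First I would fix, for each $Y \in \FEC(X)$, a canonical choice of a full set of fiber functors $\{\cF_j\}_{j\in J_Y}$ making the Galois category $\mathcal{_FE_C}(Y) = \FEC(Y)$ into a finitely connected Galois category. By definition $\Pi_1(Y)$ has precisely these $\cF_j$ as objects, and by Corollary \ref{G.essentialsurj} (together with Proposition \ref{homrep}) there is an equivalence of Galois categories
$$\Phi_Y:\FEC(Y)\xrightarrow{\ \simeq\ }\Hom_\Cat(\Pi_1(Y),\FSets)=\Pi_1(Y)\text{-}\FSets.$$
Both sides are exact functors of Galois categories and $\Phi_Y$ is constructed out of the evaluations at the $\cF_j$, so it is a morphism in $\GCat$.

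Next I would verify 2-naturality of $\Phi_\bullet$ in $Y$. Given a morphism $f\colon Y'\to Y$ in $\FEC(X)$, pullback along $f$ gives an exact functor $f^\ast\colon \FEC(Y)\to \FEC(Y')$ and composition with $f^\ast$ yields a functor $\Pi_1(Y')\to\Pi_1(Y)$ on fiber functors, hence a restriction $\Pi_1(Y)\text{-}\FSets\to \Pi_1(Y')\text{-}\FSets$. The square
$$\xymatrix{\FEC(Y)\ar[r]^{\Phi_Y}\ar[d]_{f^\ast}&\Pi_1(Y)\text{-}\FSets\ar[d]\\\FEC(Y')\ar[r]_{\Phi_{Y'}}&\Pi_1(Y')\text{-}\FSets}$$
commutes up to a coherent natural isomorphism because both compositions send an object $Z\in\FEC(Y)$ to the functor $\cF_j\mapsto (\cF_j\circ f^\ast)(Z)$. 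The 2-equivalence of Theorem \ref{galois equivalence} then guarantees that these isomorphisms satisfy the cocycle condition on composable pairs $Y''\to Y'\to Y$, so $\Phi_\bullet$ is an equivalence of 2-functors $\FEC(X)\to\GCat$.

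Finally, the stack axiom (Definition \ref{stack}) is preserved under equivalence of fibered categories: if $\Phi_\bullet\colon\F\to\G$ is a levelwise equivalence and $\F$ is a stack, then for every covering $\U$ the composite
$$\G(U)\xleftarrow{\Phi_U^{-1}}\F(U)\xrightarrow{\sim}2\text{-}\lim(\U,\F)\xrightarrow{\sim}2\text{-}\lim(\U,\G)$$
is an equivalence (the last arrow is an equivalence because 2-limits preserve levelwise equivalences; cf.\ Proposition \ref{galois2limit}), and a standard 2-out-of-3 argument makes the descent functor of $\G$ an equivalence too. Applying this with $\F=\FEC(-)$, which is a stack by Theorem \ref{Thm.8.0.2}, and $\G=\Pi_1(-)\text{-}\FSets$, yields the theorem. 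The main technical obstacle I anticipate is the coherence check in the second paragraph: verifying that the canonical natural isomorphisms filling the naturality squares for $\Phi_\bullet$ really do satisfy the cocycle law required to promote objectwise equivalences to an equivalence of fibered 2-functors. This is, however, purely formal once one invokes the full strength of Theorem \ref{galois equivalence}.
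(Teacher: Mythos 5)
Your proposal is correct and matches the paper's own (largely implicit) argument: the paper simply presents Theorem \ref{hom-stack} as an equivalent restatement of Theorem \ref{Thm.8.0.2}, the equivalence being exactly the Galois correspondence $\FEC(Y)\simeq\Pi_1(Y)\text{-}\FSets$ of Corollary \ref{G.essentialsurj}, transported levelwise to transfer the stack condition. You spell out the naturality/coherence step that the paper leaves unstated, but the route is the same.
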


\begin{Th}[Seifert-van Kampen Theorem] Let $X$ be a noetherian scheme. Then the assignment $Y\mapsto \Pi_1(Y)$ defines a costack on the site of finite \'etale coverings of $X$.
\end{Th}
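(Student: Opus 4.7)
The plan is to reduce the statement to Theorem \ref{hom-stack} via the Galois duality from Section \ref{galoischapter}. By Definition \ref{cost}, I must show that for every $U \in \FEC(X)$ and every covering $\mathfrak{U} = \{U_i \to U\}$, the canonical comparison functor
$$\kappa_{\mathfrak{U}} : 2\text{-}\colim(\mathfrak{U}, \Pi_1) \longrightarrow \Pi_1(U)$$
is an equivalence of finitely connected profinite groupoids. Since $X$ is noetherian, any covering in $\FEC(X)$ can be refined to a finite one, so I may assume that $\mathfrak{U}$ is finite and hence that Proposition \ref{galois2limit} applies to the associated \v{C}ech diagram.

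The next step is to test this equivalence on the Galois side by applying $\Hom_\Cat(-, \FSets)$. The defining universal property of $2\text{-}\colim$ from Subsection \ref{sec.2-lim.2-colim} yields a natural equivalence
$$\Hom_\Cat\bigl(2\text{-}\colim(\mathfrak{U}, \Pi_1), \FSets\bigr) \;\simeq\; 2\text{-}\lim\bigl(\mathfrak{U}, \Pi_1(-)\text{-}\FSets\bigr),$$
because $\Hom_\Cat(-,\FSets)$ turns the simplicial \v{C}ech diagram on the groupoid side arrow-by-arrow into the cosimplicial \v{C}ech diagram on the Galois side. Under this identification $\kappa_{\mathfrak{U}}$ corresponds to the usual descent comparison
$$\Pi_1(U)\text{-}\FSets \longrightarrow 2\text{-}\lim\bigl(\mathfrak{U}, \Pi_1(-)\text{-}\FSets\bigr),$$
which is an equivalence of Galois categories by Theorem \ref{hom-stack}. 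Proposition \ref{galois2limit} ensures that the target is itself a finitely connected Galois category, so Corollary \ref{eq} applies and lets me conclude that $\kappa_{\mathfrak{U}}$ was already an equivalence of profinite groupoids, which is exactly the costack condition.

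The main obstacle will be the coherence check glossed over in the second paragraph: one has to verify that the universal cocone defining $2\text{-}\colim(\mathfrak{U}, \Pi_1)$ dualises under $\Hom_\Cat(-, \FSets)$, term by term along the \v{C}ech diagram, to precisely the descent data of $\Pi_1(-)\text{-}\FSets$ over $\mathfrak{U}$ --- including the natural transformations $\lambda_{ij}$ and the $1$-cocycle condition recalled in Subsection \ref{sec.2-lim.2-colim}. Once those definitions are unfolded this is essentially bookkeeping, but it is where all the real effort in the argument sits, since it is the step that makes the duality between descent (for $\Pi_1\text{-}\FSets$) and codescent (for $\Pi_1$) genuinely usable.
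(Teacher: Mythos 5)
Your proposal is correct and follows essentially the same route as the paper: apply $\Hom_\Cat(-,\FSets)$, use the universal property (left exactness) to identify the dual of the \v{C}ech $2$-colimit with the descent $2$-limit, invoke Theorem \ref{hom-stack} to see that $\Pi_1(U)\text{-}\FSets\to 2\text{-}\lim$ is an equivalence, and conclude via Corollary \ref{eq}. Your extra remarks on finite refinement and Proposition \ref{galois2limit} only make explicit points the paper leaves implicit.
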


\begin{proof} For any covering $Z\in \Cov(Y)$ we have the functor $2$-$\colim(Z,\Pi_1)\rightarrow \Pi_1(Z)$, where $2$-$\colim(Z,\Pi_1)$ denotes the 2-colimit of 
$$\xymatrix{\Pi_1(Z\times_{X} Y)& \Pi_1(Z\times_{X} Y\times_{X} Y)\ar@<-.5ex>[l] \ar@<.5ex>[l]& \Pi_1(Z\times_{X} Y\times_{X}Y\times_{X} Y).\ar@<-.7ex>[l]\ar[l]\ar@<.7ex>[l]}$$ 
Hence we get the associated functor $\Pi_1(Z)$-$\FSets\rightarrow2$-$\colim(Z,\Pi_1)$-$\FSets$ where we denoted by $\G$-$\FSets$ the functor category $\Hom_{\Cat}(\G,\FSets)$. Since $\Hom(-,\FSets)$ is left exact, we have 
$$2\text{-}\colim(Z,\Pi_1)\text{-}\FSets\cong 2\text{-}\lim((Z,\Pi_1)\text{-}\FSets),$$ 
where $2$-$\lim((Z,\Pi_1)$-$\FSets)$ denotes the 2-limit of 
$$\xymatrix@=1.5em{\Pi_1(Z\times_X Y)\text{-}\FSets\ar@<-.5ex>[r] \ar@<.5ex>[r] & \Pi_1(Z\times_X Y\times_X Y)\text{-}\FSets\ar@<-.7ex>[r]\ar[r]\ar@<.7ex>[r] & \Pi_1(Z\times_X Y\times_X Y\times_X Y)\text{-}\FSets.}$$ 
By Theorem \ref{hom-stack} the functor $\Pi_1(Z)\text{-}\FSets\rightarrow 2$-$\lim((Z,\Pi_1)$-$\FSets)$ is an equivalence of categories. Hence from Corollary \ref{eq} $2$-$\colim(Z,\Pi_1)\rightarrow \Pi_1(Z)$ was an equivalence of categories as well. This proves the assertion.
\end{proof}

\begin{De} Let $\C$ be a 2-category. We say that $\mathfrak{T}$ is the \emph{2-terminal object}\index{Object! 2-Terminal} of $\C$, if for any other object $C\in \C$, $Hom_{\Cat}(C,\mathfrak{T})$ is equivalent to the 1-point category.
\end{De}

\begin{Th}\label{main} Let $X$ be a noetherian scheme. Then the assignment $U\mapsto\Pi_1(U)$, $U\in X$ is the 2-terminal costack over the site of \'etale coverings of $X$.
\end{Th}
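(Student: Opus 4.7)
The plan is to reduce the 2-terminality claim to a stackification problem via the Galois/costack dictionary developed in Section \ref{galoischapter}, and then invoke the classical theory of finite \'etale coverings. By Definition \ref{uniquecostack}, $\Pi_1$ is the 2-terminal costack if and only if it is the associated costack of the constant 2-functor $\mathbf{pt}:Y\mapsto \mathsf{pt}$ with value the trivial groupoid: since $\Hom_{\mathsf{Cofb}}(\mathfrak{G},\mathbf{pt})$ is the terminal category for any cofibered $\mathfrak{G}$, the defining equivalence forces $\Hom_{\mathsf{Cost}}(\mathfrak{G},\Pi_1)\simeq \mathsf{pt}$. Applying $\Hom_{\Cat}(-,\FSets)$ and invoking Corollary \ref{Galoisstackequivalence}, this further translates into showing that $\mathcal{_FE_C}$ is the associated Galois stack of the constant 2-functor $\underline{\FSets}:Y\mapsto \Hom_{\Cat}(\mathsf{pt},\FSets)=\FSets$. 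There is a tautological Galois transformation $\iota:\underline{\FSets}\to \mathcal{_FE_C}$ sending a finite set $S$ to the trivial cover $\coprod_S Y\to Y$, and the goal becomes to prove that $\iota$ is universal into a Galois stack.

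To compute the stackification I would use Proposition \ref{direct}. For $\underline{\FSets}$, an object of $2\text{-}\lim(\U,\underline{\FSets})$ on a covering $\U=\{U_i\to Y\}$ consists of finite sets $S_i$ together with gluing isomorphisms $\phi_{ij}:S_j\to S_i$ on $U_{ij}$ satisfying the cocycle condition; this is precisely the data needed to glue the trivial covers $\coprod_{S_i}U_i$ into a finite \'etale cover of $Y$ trivialized by $\U$. Consequently $2\text{-}\lim(\U,\underline{\FSets})$ is Galois-equivalent to the full subcategory $\FEC(Y)_\U\subset \FEC(Y)$ of covers trivialized by $\U$, and its filtered 2-colimit over $\U$ yields the full subcategory of $\FEC(Y)$ consisting of covers trivialized by \emph{some} covering in $\FEC(X)$. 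Proposition \ref{galois2limit} and Corollary \ref{galois-fun-stackification} keep everything inside the 2-category of Galois stacks throughout the computation.

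The main obstacle is to verify that every finite \'etale cover $V\to Y$ is trivialized by some covering, so that the natural functor from the stackification into $\mathcal{_FE_C}$ is essentially surjective. This is the classical fact that $V$ is split by its Galois closure $W\to Y$: since $Y$ is noetherian, $W\to Y$ is itself a surjective finite \'etale map and therefore a singleton covering in $\FEC(X)$, and $V\times_Y W$ is a disjoint union of copies of $W$. Full-faithfulness on the other hand follows because morphisms between finite \'etale covers are representable and hence determined locally. Thus $\iota$ becomes an equivalence after a single round of $\F\mapsto \F'$, and because $\mathcal{_FE_C}$ is already a stack by Theorem \ref{Thm.8.0.2}, no further iteration is needed. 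Translating this equivalence of Galois stacks back across Corollary \ref{Galoisstackequivalence} identifies $\Pi_1$ as the associated costack of $\mathbf{pt}$, whence $\Pi_1$ is the 2-terminal costack.
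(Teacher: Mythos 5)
Your overall strategy (reduce 2-terminality to being the associated costack of the point, transfer across $\Hom_{\Cat}(-,\FSets)$ via Corollary \ref{Galoisstackequivalence}, and compute the associated stack of the constant $\FSets$-valued 2-functor) is the same as the paper's, but the middle step, where you compute that stackification directly, has a genuine gap. You claim that an object of $2\text{-}\lim(\U,\underline{\FSets})$ is ``finite sets $S_i$ together with gluing isomorphisms $\phi_{ij}$ on $U_{ij}$'' and that this is exactly the gluing data for a finite \'etale cover trivialised by $\U$. It is not: the value of the constant 2-functor on $U_{ij}$ is the bare category $\FSets$, so $\phi_{ij}$ is a \emph{single} bijection $S_j\to S_i$, constant over all of $U_{ij}$, whereas the descent datum of an actual cover trivialised by $\U$ is a locally constant function on $U_{ij}$ with values in $\mathrm{Iso}(S_j,S_i)$, which may differ on different connected components of $U_{ij}$ (and, worse, the constant 2-functor imposes genuine cocycle conditions even over empty multiple intersections). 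Your essential-surjectivity argument via the Galois closure fails at exactly this point: for the singleton covering $\{W\to Y\}$ by a connected Galois cover, an object of $2\text{-}\lim(\{W\to Y\},\underline{\FSets})$ is a finite set $S$ with one bijection $\phi:S\to S$ satisfying the cocycle identity $\phi=\phi\circ\phi$, hence $\phi=\mathrm{id}$; so this 2-limit is just $\FSets$ and only the trivial covers $\coprod_S Y$ arise, not the nontrivial $V$ split by $W$ (whose gluing datum varies over the components of $W\times_Y W\cong\coprod_G W$). Consequently the claim that a single round of $\F\mapsto\F'$ already gives $\FEC(Y)$ is false.

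This is precisely why the paper does not compute the stackification by hand: it invokes Lemma \ref{LCS}, i.e.\ the full iterated construction of Proposition \ref{direct} (the early iterations in effect sheafify the Hom-presheaves, so that in later descent data the gluing isomorphisms become locally constant and can vary over components), to identify $\hat{P_S}$ with $U\mapsto LCS(U)$, and then uses the classical equivalence $LCS(U)\simeq \Pi_1(U)\text{-}\FSets$ for a noetherian scheme, together with Theorem \ref{Thm.8.0.2} and Corollaries \ref{galois-fun-stackification} and \ref{Galoisstackequivalence}, plus the final observation that there is exactly one exact functor $P_S\to C_S$ (it is determined by sending the singleton to the terminal object). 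To repair your argument you should either quote Lemma \ref{LCS} as the paper does, or redo your descent computation after first replacing $\underline{\FSets}$ by the prestack whose Hom's are the associated sheaves; as written, the one-step identification and the Galois-closure argument do not establish essential surjectivity.
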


To prove this theorem, recall the following lemma:

\begin{Le}\label{LCS} Consider the constant 2-functor $\mathfrak{s}:U\mapsto\FSets$, with morphisms in the contravariant way. Then the associated stack $\hat{\mathfrak{s}}$ is given by $\hat{\mathfrak{s}}(U)=LCS(U)$, where $LCS(U)$ denotes the category of locally constant sheaves on $U$.
\end{Le}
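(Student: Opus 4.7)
The plan is to identify the associated stack $\hat{\mathfrak{s}}$ via the explicit formula provided by Proposition \ref{direct}, and match the result term by term with $U \mapsto LCS(U)$.

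First I would verify that $LCS$ is itself a stack on $\FEC(X)$. Locally constant sheaves of finite sets form a full subcategory of étale sheaves of finite sets, and local constancy is by definition a local condition. Hence, given a covering $\U = \{U_i \to U\}$ and a compatible family of locally constant sheaves on each $U_i$, classical descent for étale sheaves of sets glues them to a sheaf on $U$, and this glued sheaf is locally constant because it is trivialized by a common refinement of $\U$ together with the individual local trivializations. Fully faithfulness on morphisms follows from the sheaf property of internal $\underline{\Hom}$.

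Next I would analyse one iteration of the direct stackification. Since all transition functors of the constant 2-functor $\mathfrak{s}$ are the identity $\FSets \to \FSets$, an object of $2\text{-}\lim(\U, \mathfrak{s})$ is a family $(S_i)_{i \in I}$ of finite sets together with bijections $\xi_{ij}\colon S_j \to S_i$ on each double fibre product $U_{ij}$, satisfying the cocycle $\xi_{ik} = \xi_{ij}\circ\xi_{jk}$ on $U_{ijk}$. This is exactly the descent datum for an étale sheaf on $U$ whose restriction to each $U_i$ is the constant sheaf $\underline{S_i}$. By descent for étale sheaves of sets this category is equivalent to the full subcategory $LCS_\U(U) \subset LCS(U)$ consisting of locally constant sheaves trivialized by $\U$. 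Passing to the filtered 2-colimit over all coverings of $U$ (refinement gives a filtered system), the subcategories $LCS_\U(U)$ exhaust $LCS(U)$, since every locally constant sheaf of finite sets is, by definition, trivialized by some covering. Hence $\mathfrak{s}'(U) := 2\text{-}\colim_\U 2\text{-}\lim(\U, \mathfrak{s}) \simeq LCS(U)$. Because $LCS$ is already a stack by the first step, the next two iterations of $(-)'$ leave it unchanged; Proposition \ref{direct} then gives $\hat{\mathfrak{s}} \simeq LCS$, with the canonical natural transformation $\mathfrak{s} \to \hat{\mathfrak{s}}$ corresponding under this equivalence to the functor sending a finite set $S$ to the constant sheaf $\underline{S}$.

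The main obstacle is the identification of $2\text{-}\lim(\U, \mathfrak{s})$ with $LCS_\U(U)$. Members of a covering in $\FEC(X)$ need not be connected, so the finite set $S_i$ together with its constant sheaf on $U_i$ must really be analysed on the connected components of $U_i$, and the cocycle condition has to be checked on the components of the fibre products $U_{ij}$ and $U_{ijk}$. A secondary subtlety is that morphisms in the colimit $\mathfrak{s}'(U)$ must be shown to coincide with morphisms of the underlying sheaves, which again reduces to the sheaf property of $\underline{\Hom}$. Once this bookkeeping is in place — essentially an exercise in descent theory for étale sheaves — the rest of the argument is formal.
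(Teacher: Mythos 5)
The paper does not actually prove this lemma: it is introduced with ``recall the following lemma'' and no argument is supplied, so there is no proof of record to compare yours against. Judged on its own terms, your overall strategy (compute $\hat{\mathfrak{s}}$ from the explicit formula of Proposition \ref{direct}, and check separately that $LCS$ is a stack) is a sensible one, and your first paragraph is fine. But the central step of your second paragraph is false, and the error is exactly the point you set aside as ``bookkeeping.''

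The claim that $2\text{-}\lim(\U,\mathfrak{s})\simeq LCS_{\U}(U)$ does not hold. Since $\mathfrak{s}$ is the \emph{constant} $2$-functor, $\mathfrak{s}(U_{ij})=\FSets$ even when $U_{ij}$ is disconnected or empty, so a descent datum for $\mathfrak{s}$ records a \emph{single} bijection $\xi_{ij}\colon S_j\to S_i$ for each pair $(i,j)$ --- not a bijection for each connected component of $U_{ij}$, which is what a descent datum for an \'etale sheaf restricting to $\underline{S_i}$ on $U_i$ would consist of. Moreover the cocycle condition over $U_{iii}$ forces $\xi_{ii}=\mathrm{id}$ and then the remaining conditions collapse the whole datum to a single finite set: for any covering $\U$ all of whose pairwise fibre products are nonempty, $2\text{-}\lim(\U,\mathfrak{s})\simeq\FSets$. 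Concretely, take $U$ connected with a connected Galois covering $V\to U$ with group $G\neq 1$: then $LCS_{\{V\}}(U)\simeq G\text{-}\FSets$, whereas $2\text{-}\lim(\{V\},\mathfrak{s})$ is a finite set $S$ with one bijection $\xi\colon S\to S$ satisfying $\xi=\xi\circ\xi$, i.e.\ $\FSets$. The twisting data that produce nonconstant locally constant sheaves live on the components of $U_{ij}$ and are simply not visible to the constant functor. Consequently $\mathfrak{s}'(U)\not\simeq LCS(U)$ (on a connected $U$ it is again just $\FSets$), and your idempotence argument for the remaining two iterations starts from a wrong identification. This is precisely why Proposition \ref{direct} requires three iterations: the first passes must first correct the values of $\mathfrak{s}$ on disconnected objects (equivalently, sheafify the $\Hom$-presheaves, turning $\Hom(S,T)$ into the locally constant functions $V\to\Hom(S,T)$) before descent data rich enough to produce all of $LCS(U)$ can appear. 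A correct proof must either track these iterations honestly, or bypass the formula and verify the universal property of Definition \ref{uniquestack} directly, using that every object of $LCS(U)$ is locally in the image of $\mathfrak{s}\to LCS$ and that the $\Hom$-sheaves agree locally.
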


Let $A$ be a covariant 2-functor. Recall that we denoted by $A_S$ the contravariant 2-functor given by $U\mapsto Hom(A(U),\FSets)$.

\begin{proof}[Proof of Thm. \ref{main}] We have already shown that the assignment $U\rightarrow \Pi_1(U)$ forms a costack. Hence to prove this theorem, we essentially have to show that for every costack $C$ we have an essentially unique map $C\rightarrow \Pi_1$.
Denote by $P$ the constant, covariant assignment $U\mapsto \pt$. It is clear that we have a map $C\rightarrow P$ and hence a map $P_S\rightarrow C_S$ between Galois 2-functors, which is a Galois transformation as it's induced by a functor between groupoids. As shown in Lemma \ref{LCS}, the stackification of $P_S$ exists and is $U\mapsto LCS(U)$, where $LCS(U)$ denotes the category of locally constant sheaves on $U$. In the case of noetherian schemes, $LCS(U)$ is equivalent to $\Pi_1(U)$-$\FSets$, which is a Galois category. Since $C$ was a costack, $C_S$ is a Galois stack and hence the map $P_S\rightarrow C_S$ factors through $\Pi_{1S}$. Hence, using Corollary \ref{galois-fun-stackification}, we know that the map $\Pi_{1S}\rightarrow C_S$ is a Galois transformation. 

Hence by the uniqueness of the associated stack (see Definition \ref{uniquestack}) and Corollary \ref{Galoisstackequivalence} respectively, we have the following equivalences of categories: 
$$\Hom_{\mathsf{Gal}}(P_S,C_S)\cong\Hom_{\mathsf{Gal}}(\Pi_{1S},C_S)\cong \Hom_\Cat(C,\Pi_1).$$
Here $\Hom_\mathsf{Gal}$ denotes the category of Galois transformations (i.e. of natural transformations respecting finite limits and colimits). Uniqueness and existence now comes from the fact that we have precisely one exact functor $P_S\rightarrow C_S$. The last claim is true because $P_S$ is equivalent to $\FSets$ and hence a functor respecting finite colimits is defined by its value on the singleton $\star$, which has to map to the terminal object of $C_S$. 
\end{proof}

\end{document}